\title{The Ramsey number of dense graphs}
\author{David Conlon\thanks{St John's College, Cambridge CB2 1TP, United Kingdom.
E-mail: {\tt D.Conlon@dpmms.cam.ac.uk}. Supported by a
research fellowship at St John's College.}}
\newenvironment{proof}
      {\medskip\noindent{\bf Proof.}\hspace{1mm}}
      {\hfill$\Box$\medskip}
\def\qed{\ifvmode\mbox{ }\else\unskip\fi\hskip 1em plus 10fill$\Box$}
\newtheorem{theorem}{Theorem}[section]
\newtheorem{lemma}[theorem]{Lemma}
\newtheorem{corollary}[theorem]{Corollary}
\newtheorem{problem}[theorem]{Problem}
\def\d{\delta}
\def\D{\Delta}
\def\e{\epsilon}
\def\r{\rho}
\def\s{\sigma}
\begin{document}
\date{}
\maketitle

\begin{abstract}
The Ramsey number $r(H)$ of a graph $H$ is the smallest number $n$ such that, in any two-colouring of the edges of $K_n$, there is a monochromatic copy of $H$. We study the Ramsey number of graphs $H$ with $t$ vertices and density $\r$, proving that $r(H) \leq 2^{c \sqrt{\r} \log (2/\r) t}$. We also investigate some related problems, such as the Ramsey number of graphs with $t$ vertices and maximum degree $\r t$ and the Ramsey number of random graphs in $\mathcal{G}(t, \r)$, that is, graphs on $t$ vertices where each edge has been chosen independently with probability $\r$.
\end{abstract}

\section{Introduction}

Given a graph $H$, the {\it Ramsey number $r(H)$} is defined to be the smallest natural number $n$ such that, in any two-colouring of the edges of $K_n$, there exists a monochromatic copy of $H$. That these numbers exist was first proven by Ramsey \cite{R30} and rediscovered independently by Erd\H{o}s and Szekeres \cite{ES35}. Since their time, and particularly since the 1970's, Ramsey theory has grown into one of the most active areas of research within combinatorics, overlapping variously with graph theory, number theory, geometry and logic. 

The most famous question in the field is that of estimating the Ramsey number $r(t)$ of the complete graph $K_t$ on $t$ vertices. Despite some small improvements \cite{C09, S75}, the standard estimates, that $\sqrt{2}^t \leq r(t) \leq 4^t$, have remained largely unchanged for over sixty years. What, however, happens if one takes a slightly less dense graph on $t$ vertices? One would expect, for example, that if $H$ is a graph with only half the edges of a complete graph then $r(H) \leq (4 - \e)^t$ for some positive $\e$. Curiously, no theorem of this variety seems to be known. Our aim is to bridge this apparent omission in the theory. 

The density of a graph $H$ with $t$ vertices and $m$ edges is given by $\r = m/\binom{t}{2}$. We would like to determine the Ramsey number of a graph $H$ with $t$ vertices and given density $\r$. We shall always assume that $H$ has no isolated vertices. Otherwise, we could have graphs with zero density and arbitrarily large Ramsey number.

To get a lower bound, consider a graph on $t$ vertices containing a clique with $\frac{\sqrt{\r}}{2} t$ vertices, with the remaining edges (around $\frac{3\r}{4} \binom{t}{2}$ of them) distributed so that the graph has no isolated vertices. By the usual lower bound on Ramsey numbers, we see that the Ramsey number of this graph is at least $2^{\sqrt{\r} t/4}$. We prove an upper bound which comes close to matching this lower bound. In particular, it gives an exponential improvement on the trivial bound $4^t$ when $\r$ is a fixed, though small, positive density.

\begin{theorem} \label{IntroMain}
There exists a constant $c$ such that any graph $H$ on $t$ vertices with density $\r$ satisfies 
\[r(H) \leq 2^{c \sqrt{\r} \log(2/\r) t}.\]
\end{theorem}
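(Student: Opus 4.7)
The plan is to exploit the bound $\chi(H) \le 1 + \sqrt{2m} = O(\sqrt{\rho}\,t)$ (equivalently, degeneracy $d = O(\sqrt{\rho}\,t)$) and embed $H$ greedily into one colour class of $K_n$, after preprocessing the host by dependent random choice. Concretely, fix a degeneracy ordering $v_1,\dots,v_t$ of $V(H)$ so that each $v_i$ has at most $d$ earlier neighbours; the target is then to greedily construct a monochromatic embedding $\phi$ that respects this ordering.

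In any 2-colouring of $K_n$ with $n = 2^{c\sqrt{\rho}\log(2/\rho)\,t}$, the majority colour (say red) has density at least $1/2$. I would first extract a large subset $W\subseteq V(K_n)$ that is \emph{red-regularised} in the following sense: for every $S\subseteq W$ with $|S|\le d$, the common red neighbourhood of $S$ inside $W$ has size at least $|W|\cdot(c'\rho)^{|S|}$. Such a $W$ is obtained by iterating a dependent-random-choice step $O(\log(2/\rho))$ times, each pass sharpening the concentration of common-neighbourhood sizes at the cost of reducing $|W|$ by a factor of roughly $1/\rho$.

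Given such a $W$, the embedding is essentially routine: at step $i$, place $v_i$ at any vertex of $W$ (not already used) lying in the common red neighbourhood of $\phi(v_{j_1}),\dots,\phi(v_{j_{d_i}})$, where $v_{j_\ell}$ are the earlier neighbours of $v_i$. By red-regularity, the candidate set has size at least $|W|\cdot(c'\rho)^{d} - t$, which is positive provided $\log|W| \ge d\log(1/\rho) + O(\log t) = O(\sqrt{\rho}\log(2/\rho)\,t)$, precisely the order of magnitude supplied by the initial choice of $n$.

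The technical heart of the argument, and what I expect to be the main obstacle, is the host preparation: a naive single-shot dependent random choice loses a factor of $2^{|S|}$ per constraint rather than $(c'/\rho)^{|S|}$, and would therefore yield only the weaker bound $2^{O(\sqrt{\rho}\,t\log t)}$. Improving the exponent from $\log t$ to $\log(2/\rho)$ requires the iterated version of dependent random choice, carefully arranged so as to exploit the full gap between the majority-colour density ($\ge 1/2$) and the density $\rho$ relevant for the embedding; tracking this iteration, and ensuring that $|W|$ does not fall below $n^{\Omega(1)}$ during $O(\log(2/\rho))$ passes, is where the delicate quantitative work lies.
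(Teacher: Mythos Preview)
Your host-preparation step has a genuine gap. A ``red-regularised'' set $W$ in which every $d$-subset has common red neighbourhood at least $|W|(c'\rho)^{|S|}$ need not exist, regardless of how large $n$ is or how many dependent-random-choice passes you run. Take red to be the complete bipartite graph between two halves of $V(K_n)$: red has density $1/2$, yet any $W$ meeting both halves contains a red edge whose endpoints have empty common red neighbourhood, while any $W$ confined to one half carries no red edges at all. In this particular example one happily embeds $H$ in blue (a disjoint union of two cliques on $n/2$ vertices), but your outline commits to the majority colour up front and provides no fallback. More generally, the failure of regularisation in one colour does not obviously force success in the other, and handling this dichotomy is exactly where the work lies; iterated dependent random choice, by itself, does not supply that mechanism.

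The paper's route is quite different. It first strips from $H$ the at most $\sqrt{\rho}\log(2/\rho)\,t$ vertices of degree exceeding $\sqrt{\rho}\,t/\log(2/\rho)$, leaving a subgraph $H'$ of bounded maximum degree; the removed vertices are later absorbed into a monochromatic clique of that size, built by the standard neighbourhood-chasing argument. The substantive ingredient is an asymmetric bound $r(K_t,H')\le 2^{c\rho'\log^2(2/\rho')t}$ for graphs $H'$ of maximum degree $\rho' t$ (here $\rho'=\sqrt{\rho}/\log(2/\rho)$, so the exponent collapses to $c\sqrt{\rho}\log(2/\rho)\,t$). That bound is proved by induction on the clique size $s$, using the Graham--R\"odl--Ruci\'nski embedding lemma as a dichotomy at each step: either red is bi-$(\sigma,\rho')$-dense and $H'$ embeds greedily in red, or there exist two large sets with blue density at least $1-\rho'$, and one exploits this dense blue structure (in the spirit of Erd\H{o}s--Szemer\'edi) to advance the blue clique. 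This alternation between ``embed in red'' and ``exploit dense blue'' is precisely the mechanism your sketch is missing.
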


We shall also prove some related results. Given two graphs $H_1$ and $H_2$, the {\it Ramsey number $r(H_1, H_2)$} is the smallest natural number $n$ such that, in any red/blue-colouring of the edges of $K_n$, there is guaranteed to be a blue copy of $H_1$ or a red copy of $H_2$. The Ramsey number $r(K_t, H)$ of the complete graph $K_t$ against a graph $H$ with $t$ vertices and maximum degree $\r t$ turns out to be of particular importance. A method of Graham, R\"odl and Ruci\'nski \cite{GRR00} easily implies that $r(K_t, H) \leq 2^{c \r t \log^2 t}$. We replace the $\log^2 t$ factor with a similar factor depending only on $\r$.

\begin{theorem} \label{IntroCliqueMax}
There exists a constant $c$ such that any graph $H$ on $t$ vertices with maximum degree $\r t$ satisfies 
\[r(K_t, H) \leq 2^{c \r \log^2 (2/\r) t}.\]
\end{theorem}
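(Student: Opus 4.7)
The plan is to combine an iterative red-clique growth argument with a dependent-random-choice embedding step. Let $N = 2^{c\rho\log^2(2/\rho)t}$ and consider a red/blue coloring of $K_N$ with no red $K_t$; set $\Delta = \rho t$ and $k = \lceil 2/\rho \rceil$. I would start by iteratively trying to grow a red clique: maintaining a common red neighborhood $V_i$ (with $V_0 = V(K_N)$), at step $i$ I look for a vertex $v \in V_i$ whose red degree inside $V_i$ is at least $(1-1/k)|V_i|$. If such a $v$ exists, I add it to the red clique and replace $V_i$ by $V_{i+1} = V_i \cap N_R(v)$, so that $|V_{i+1}| \geq (1-1/k)|V_i|$. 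If this lasts $t$ steps we have built a red $K_t$; otherwise it terminates at some step $j < t$ with $|V_j| \geq N(1-1/k)^t \geq N e^{-\rho t/2}$ and every vertex of $V_j$ has blue degree at least $|V_j|/k$ in $V_j$.

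Within $V_j$ I would then apply an iterated form of dependent random choice. In each of roughly $\log(2/\rho)$ rounds I sample a random subset of appropriate size and pass to its common blue neighborhood. Each round biases the remaining set toward vertices of high blue degree and sharpens the property that small subsets have large common blue neighborhoods. After all the rounds I would have a set $U \subseteq V_j$ of size at least $2t$ in which every $\Delta$-subset has at least $2t$ common blue neighbors in $U$. At that point a direct greedy embedding produces a blue copy of $H$: the already-embedded neighbors of the next vertex to be placed form a set of size at most $\Delta$, whose common blue neighborhood in $U$ contains enough still-unused vertices.

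The main obstacle is the parameter tuning in the iterated dependent-random-choice step. Each round shrinks the working set by a factor roughly $2^{c'\rho t\log(2/\rho)}$, so the product over $\log(2/\rho)$ rounds is $2^{O(\rho t \log^2(2/\rho))}$; meanwhile the expected count of bad $\Delta$-subsets in each round (those with small common blue neighborhood) must be driven down using the blue-degree lower bound from the clique-growth step, while still leaving a set large enough to host $H$. The improvement from $\log^2 t$ to $\log^2(2/\rho)$ relative to the Graham--R\"odl--Ruci\'nski bound is precisely what comes from driving the iteration by $2/\rho$ rather than by $t$, and the whole argument has to be organised so that the two-level nature of the bound---$\log(2/\rho)$ rounds each contributing a factor $\log(2/\rho)$ in the exponent---falls out cleanly.
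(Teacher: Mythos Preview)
Your clique-growth step is sound, but it only delivers blue minimum degree about $(\rho/2)|V_j|$, i.e.\ a blue graph of density roughly $\rho/2$; this is too weak a hypothesis for the embedding you propose. A single round of dependent random choice in $V_j$ produces a set $U_1$ in which every $\Delta$-subset has many common blue neighbours \emph{in $V_j$}, but not in $U_1$ itself --- and that distinction is exactly what your greedy embedding requires, since each newly placed vertex must itself lie in the set carrying the DRC guarantee. Passing to a common blue neighbourhood does not raise the blue density or minimum degree inside the resulting set, so your assertion that each round ``biases toward high blue degree'' and ``sharpens the common-neighbourhood property'' has no mechanism behind it. Worse, your stated target --- a set $U$ of size about $2t$ in which every $\Delta$-subset has at least $2t$ common blue neighbours in $U$ --- would force $U$ to be essentially a blue clique, which is far more than DRC can extract from a graph of density $\rho/2$ and far more than embedding $H$ actually needs.

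The paper avoids this difficulty by never trying to embed $H$ into the sparse colour. It runs an induction on $s$ in $r(K_s,H)$: either the $H$-colour is bi-$(\sigma,\rho)$-dense on the whole current vertex set, and the Graham--R\"odl--Ruci\'nski embedding lemma gives $H$ at once (this costs a factor $\rho^{-\rho t}$, one $\log(2/\rho)$ in the exponent); or bi-density fails, which hands you two large sets with density at least $1-\rho$ in the \emph{clique} colour between them. Inside those one locates a $K_{2s/3}$ by the inductive hypothesis, and then a K\H{o}v\'ari--S\'os--Tur\'an count promotes it to a $K_s$. The second $\log(2/\rho)$ factor comes from the $O(\log(1/\rho))$ halvings of $s$ needed to descend from $s=t$ to the base case $s=\rho t$. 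Thus the $\log^2(2/\rho)$ arises as \emph{embedding-lemma cost} times \emph{induction depth}, not from iterated DRC.
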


As a corollary of this result, we can prove an upper bound for the Ramsey number of the complete graph $K_t$ against a graph $H$ with $t$ vertices and density $\r$. 

\begin{theorem} \label{IntroCliqueDense}
There exists a constant $c$ such that any graph $H$ on $t$ vertices with density $\r$ satisfies 
\[r(K_t, H) \leq 2^{c \sqrt{\r} \log^{3/2} (2/\r) t}.\]
\end{theorem}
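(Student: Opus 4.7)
The plan is to deduce Theorem~\ref{IntroCliqueDense} from Theorem~\ref{IntroCliqueMax} via a degree-splitting argument. We may assume $\rho$ is at most some small absolute constant, since otherwise $r(K_t,H)\le r(K_t,K_t)\le 4^t$ already yields the bound. Set the threshold
\[
D \,=\, t\sqrt{\rho/\log(2/\rho)},
\]
and partition $V(H)=V_1\cup V_2$, where $V_1$ consists of all vertices of degree greater than $D$ in $H$. A standard double-counting with $\sum_v\deg_H(v)=2m=\rho t(t-1)$ gives $|V_1|\le 2m/D\le t\sqrt{\rho\log(2/\rho)}$, while by construction $H[V_2]$ has maximum degree at most $D$.

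Now I would apply Theorem~\ref{IntroCliqueMax} to the low-degree part. Let $H'$ be the graph on exactly $t$ vertices obtained from $H[V_2]$ by adding $|V_1|$ isolated vertices, so $H'$ has maximum degree $\rho' t$ with $\rho'=\sqrt{\rho/\log(2/\rho)}$. Using $\log(2/\rho')=\tfrac{1}{2}\log(2/\rho)+O(\log\log(2/\rho))$, Theorem~\ref{IntroCliqueMax} gives
\[
r(K_t,H') \,\le\, 2^{c\rho'\log^2(2/\rho')\,t} \,\le\, 2^{c'\sqrt{\rho}\,\log^{3/2}(2/\rho)\,t},
\]
which is already the target bound for $H'$. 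The particular choice of $D$ is calibrated precisely so that the contribution $D\log^2(2t/D)$ coming from Theorem~\ref{IntroCliqueMax} and the contribution $|V_1|\log(2/\rho)$ coming from handling $V_1$ are of the same order $\Theta(t\sqrt{\rho}\log^{3/2}(2/\rho))$ on a logarithmic scale.

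The remaining task is to promote a blue copy of $H'$ to a blue copy of the full graph $H$, that is, to place the $|V_1|$ high-degree vertices with the correct adjacencies (in blue) to the already-embedded images of their $V_2$-neighbours and to each other. I would strengthen the application of Theorem~\ref{IntroCliqueMax}, seeking a blue copy of $H'$ with the additional property that, for every $v\in V_1$, the images of the $V_2$-neighbours of $v$ have a very large common blue neighbourhood; each vertex of $V_1$ can then be embedded into the corresponding common neighbourhood, with $H[V_1]$ realised in blue inside the intersection of these neighbourhoods (where the absence of a red $K_t$ still forces enough blue structure). If each extension step costs a factor of $2^{O(\log(2/\rho))}$ in $N$, the total extension cost is $2^{O(|V_1|\log(2/\rho))}=2^{O(\sqrt{\rho}\log^{3/2}(2/\rho)\,t)}$, matching the main term and giving the claimed bound. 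The central technical obstacle is producing such a ``robust'' copy of $H'$ with the large-common-neighbourhood condition holding simultaneously at every vertex of $V_1$; I expect this to be handled by a dependent-random-choice argument, or by adapting the proof of Theorem~\ref{IntroCliqueMax} so that it outputs a family of copies of $H'$ with rich common-neighbourhood structure, in the spirit of the Graham--R\"odl--Ruci\'nski methodology.
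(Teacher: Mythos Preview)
Your threshold $D=t\sqrt{\rho/\log(2/\rho)}$ and the resulting bound $|V_1|\le t\sqrt{\rho\log(2/\rho)}$ are exactly what the paper uses, and the application of Theorem~\ref{IntroCliqueMax} to the low-degree part $H'$ is correct and gives the right exponent. The gap is in your extension mechanism for the high-degree vertices.

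You propose to embed $H'$ first and then, for each $v\in V_1$, find a common monochromatic neighbour of the images of $N(v)\cap V_2$. But a vertex $v\in V_1$ can have up to $t-1$ neighbours in $V_2$; you would be asking for the common blue neighbourhood of a \emph{specified} set of $\Theta(t)$ vertices to be large. Nothing in the setup forces this: even if every vertex had blue degree $(1-\rho)N$, the common blue neighbourhood of $k$ prescribed vertices is only guaranteed to have size $\approx(1-\rho)^k N$, so a single extension step could cost $2^{\Theta(t)}$ rather than your hoped-for $2^{O(\log(2/\rho))}$. Your suggestion of a ``robust'' copy of $H'$ via dependent random choice does not obviously help, because the condition must hold \emph{simultaneously} for $|V_1|$ many sets of size up to $\Theta(t)$ each; this is not the regime in which dependent random choice operates.

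The paper avoids this difficulty by reversing the order of operations. Before looking for $H'$ at all, it runs the standard Erd\H{o}s--Szekeres neighbourhood-chasing process: pick a vertex, pass to its red neighbourhood (factor $\rho$) or its blue neighbourhood (factor $1-\rho$), and repeat. One stops either with $t-1$ blue steps, yielding a blue $K_t$, or with $|V_1|=t\sqrt{\rho\log(2/\rho)}$ red steps, yielding a red clique $C$ of size $|V_1|$ together with a set $U$ of size at least $\rho^{|V_1|}(1-\rho)^{t}N$ such that \emph{every} edge between $C$ and $U$ is red. Now apply Theorem~\ref{IntroCliqueMax} inside $U$ to find a red copy of $H'$ (or a blue $K_t$). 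Because $C$ is completely red-joined to $U$, \emph{any} red copy of $H'$ in $U$ extends to a red copy of $H$ by placing the vertices of $V_1$ arbitrarily into $C$; no adjacency tracking is needed. The shrinkage $\rho^{|V_1|}(1-\rho)^t$ is exactly $2^{-O(\sqrt{\rho}\log^{3/2}(2/\rho)\,t)}$, so the arithmetic you already did goes through. The key point you missed is that embedding $V_1$ as a \emph{clique} with full red adjacency to the remaining set makes the extension free, and this is only possible if you handle $V_1$ first.
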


Note that both of these bounds are already quite good. For Theorem \ref{IntroCliqueMax}, a random argument gives a lower bound of the form $r(K_t, H) \geq 2^{c \r \log(2/\r) t}$. For Theorem \ref{IntroCliqueDense}, note that the Ramsey number $r(K_t, K_{\sqrt{\r} t/2}) \geq 2^{c \sqrt{\r} \log(2/\r) t}$. If we now place the remaining edges to form a graph $H$ on $t$ vertices with density $\r$ and no isolated vertices, we have $r(K_t, H) \geq 2^{c \sqrt{\r} \log(2/\r) t}$.

A similar question to that we have been looking at, suggested by Erd\H{o}s \cite{E84}, is to determine the Ramsey number of a graph $H$ with a given number of edges. It is an elementary consequence of the standard bounds for $r(t)$ that if $m$ is the number of edges in the complete graph $K_t$ then $r(t) \leq 2^{c \sqrt{m}}$. Erd\H{o}s conjectured that a similar upper bound should hold for all graphs $H$, that is, he conjectured the existence of a constant $c$ such that if $H$ is any graph with $m$ edges then $r(H) \leq 2^{c \sqrt{m}}$. For bipartite graphs, this conjecture was verified by Alon, Krivelevich and Sudakov \cite{AKS03}. Furthermore, by using the machinery of Graham, R\"{o}dl and Ruci\'nski \cite{GRR00}, they made significant progress towards the full conjecture, showing that, for any graph $H$ with $m$ edges, $r(H) \leq 2^{c \sqrt{m} \log m}$.

If we substitute $m = \r \binom{t}{2}$ in the result of Alon, Krivelevich and Sudakov, we find that $r(H) \leq 2^{c \sqrt{\r} t \log t}$ for any graph $H$ with $t$ vertices and density $\r$. Theorem \ref{IntroMain} improves on this result. Moreover, putting $\r = m/\binom{t}{2}$ in Theorem \ref{IntroMain}, we get the following theorem.

\begin{theorem}
There exists a constant $c$ such that any graph $H$ with $m$ edges and density $\r$ satisfies
\[r(H) \leq 2^{c \sqrt{m} \log (2/\r)}.\]
\end{theorem}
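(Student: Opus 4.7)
The plan is to derive this theorem as an immediate corollary of Theorem~\ref{IntroMain}. The statement concerns a graph $H$ with $m$ edges, $t$ vertices, and density $\rho = m/\binom{t}{2}$, and Theorem~\ref{IntroMain} already supplies an upper bound on $r(H)$ expressed in terms of $t$ and $\rho$. The only work is to re-express the exponent $\sqrt{\rho}\log(2/\rho)\,t$ in terms of $m$ and $\rho$.

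The key observation is the identity
\[ \sqrt{\rho}\,t \;=\; t\sqrt{\frac{2m}{t(t-1)}} \;=\; \sqrt{\frac{2mt}{t-1}}, \]
which is at most $2\sqrt{m}$ for every $t \geq 2$. Since our standing assumption is that $H$ has no isolated vertices, any $H$ with $m \geq 1$ edges automatically satisfies $t \geq 2$, so this bound applies in the only nontrivial case.

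Substituting into the conclusion of Theorem~\ref{IntroMain} yields
\[ r(H) \;\leq\; 2^{c\sqrt{\rho}\log(2/\rho)\,t} \;\leq\; 2^{2c\sqrt{m}\log(2/\rho)}, \]
which is the desired bound, with constant $2c$ in place of $c$. There is no genuine obstacle in this step; the proof is a one-line change of variables, and its role is simply to translate Theorem~\ref{IntroMain} into the language of Erd\H{o}s's edge-counting formulation. All the substantive work — establishing the exponential saving in $\sqrt{\rho}$ — takes place in Theorem~\ref{IntroMain} itself, and here we are only observing that the combination $\sqrt{\rho}\,t$ that appears in the exponent is, up to a universal constant, exactly $\sqrt{m}$.
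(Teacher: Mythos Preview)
Your proposal is correct and matches the paper's own derivation: the paper states this theorem immediately after observing that it follows from Theorem~\ref{IntroMain} by ``putting $\rho = m/\binom{t}{2}$,'' and your computation $\sqrt{\rho}\,t = \sqrt{2mt/(t-1)} \leq 2\sqrt{m}$ is exactly the substitution the paper has in mind.
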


In particular, since a graph with no isolated vertices satisfies $\r \geq \frac{1}{t} \geq \frac{1}{2m}$, we have another proof that $r(H) \leq 2^{c \sqrt{m} \log m}$ for any graph $H$ with $m$ edges. 

Our methods also allow us to study Ramsey numbers of dense random graphs. The {\it binomial random graph $\mathcal{G}(t, \r)$} is the probability space consisting of all labelled graphs on $t$ vertices where each edge is chosen independently with probability $\r$. We shall say that the random graph $\mathcal{G}(t,\r)$ possesses a graph property $\mathcal{P}$ {\it almost surely} if the probability that $\mathcal{G}(t,\r)$ satisfies $\mathcal{P}$ tends to $1$ as $t$ tends to infinity. For sparse random graphs, taken, for $d$ fixed, with probability $\r = d/t$, the Ramsey number of graphs $H \in \mathcal{G}(t,\r)$ was studied by Fox and Sudakov \cite{FS09}, who showed that, almost surely, $2^{c_1 d} t \leq r(H) \leq 2^{c_2 d \log^2 d} t$.

A first estimate for the Ramsey number of dense random graphs follows from Theorem \ref{IntroCliqueMax}. This theorem easily implies that if a graph $H$ on $t$ vertices has maximum degree at most $2 \r t$, then $r(H) \leq 2^{c \r \log^2(2/\r) t}$. But, provided $\r \geq c' \frac{\log t}{t}$, a random graph $H \in \mathcal{G}(t, \r)$ will almost surely have maximum degree at most $2 \r t$, from which it follows that $r(H) \leq 2^{c \r \log^2 (2/\r) t}$. For $\r$ large, we show how this may be improved still further.

\begin{theorem} \label{IntroRandom}
There exist constants $c$ and $c'$ such that, if $H \in \mathcal{G}(t, \r)$ is a random graph with $\r \geq c' \frac{\log^{3/2} t}{\sqrt{t}}$, $H$ almost surely satisfies
\[r(H) \leq 2^{c \r \log(2/\r) t}.\]
\end{theorem}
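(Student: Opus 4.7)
My plan refines the Graham--R\"odl--Ruci\'nski-type argument used for Theorem~\ref{IntroCliqueMax}, saving one of its two $\log(2/\r)$ factors by exploiting properties specific to the random graph $\G(t,\r)$. The hypothesis $\r \geq c'\log^{3/2}t/\sqrt{t}$ forces $\r^2 t \geq C\log^3 t$, so Chernoff bounds together with a union bound show that almost surely every vertex of $H$ has degree $(1\pm o(1))\r t$, every pair of vertices has codegree $(1\pm o(1))\r^2 t$, and more generally $|N_H(u)\cap N_H(v)\cap S|$ concentrates around $\r^2|S|$ for every moderately large $S \subseteq V(H)$. Under these estimates, $H$ behaves not merely like a graph of maximum degree $\r t$ but like a quasirandom graph of density $\r$, and the aim is to turn the latter property into an improved Ramsey bound.

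In the proof of Theorem~\ref{IntroCliqueMax}, one iterates a sparsification/refinement step roughly $\log(2/\r)$ times in order to handle an arbitrary graph $H$ of maximum degree $\r t$, and each iteration contributes $\Theta(\r t)$ to the exponent. My plan is to show that for random $H$ a single pass of this machinery is enough. Starting from a $2$-coloured $K_n$ with $n = 2^{c\r\log(2/\r)t}$, I would apply one regularisation step to extract a monochromatic bipartite pair $(X,Y)$ with $|X|=|Y| = 2^{c\r\log(2/\r)t}$ that is $(\e,d)$-pseudorandom with $d$ bounded below by a power of $\r$. I would then embed $H$ into $(X,Y)$ by greedy dependent random choice: the candidate set at each step is the common monochromatic neighbourhood of the already embedded images, and pseudorandomness together with the uniformly concentrated codegrees of $H$ keeps this set within a polynomial factor of its expected size $d^{\deg_H}|X|$, which remains positive through the final vertex.

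The main obstacle is the extension step. For a worst-case $H$ of maximum degree $\r t$, the codegrees can be wildly irregular, and it is precisely this irregularity that forces the repeated sparsification in Theorem~\ref{IntroCliqueMax}. To push a one-pass greedy embedding through, one must simultaneously control every codegree $|N_H(u)\cap N_H(v)\cap S|$ that appears along the embedding process, and the hypothesis $\r \geq c'\log^{3/2}t/\sqrt{t}$ is exactly what is needed so that a single union bound of Chernoff estimates covers all such triples $(u,v,S)$. Verifying that no low-probability bad event sabotages the dependent random choice, and that the candidate set survives all the way through the embedding, is the technically delicate part of the argument.
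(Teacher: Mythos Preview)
Your sketch has a real gap at the step you yourself flag as ``technically delicate'', and the problem is not just technical. You claim that a single regularisation step produces a monochromatic bipartite pair $(X,Y)$ that is $(\e,d)$-pseudorandom with $d$ a fixed power of $\r$. But one failure of bi-$(\s,\d)$-density in red only gives you a pair $(A,B)$ with \emph{average} blue density $\geq 1-\d$ between them; it gives no lower bound on the blue density between sub-pairs, i.e.\ no blue pseudorandomness. With only one-sided density you cannot run a greedy embedding of a non-bipartite $H$: the common blue neighbourhood of the already-embedded images can collapse at any step. Moreover, note that if one genuinely could extract a blue bi-$(\s,\d)$-dense pair of size $2^{c\r\log(2/\r)t}$ in one step with $\d$ a power of $\r$, then Lemma~\ref{Embedding} would already embed \emph{every} graph of maximum degree $\r t$, random or not, and you would have improved Theorem~\ref{IntroCliqueMax} by a $\log(2/\r)$ factor for arbitrary $H$. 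So either your ``one step'' is doing too much, or the randomness of $H$ must enter elsewhere---and your proposal does not say where. The codegree concentration you invoke is a property of $H$, whereas the obstruction lives in the host colouring; you have not explained any mechanism by which regular codegrees in $H$ compensate for the lack of blue bi-density in the host.

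The paper's proof takes a quite different route. It does not use pairwise codegrees at all; the single pseudorandom property it extracts from $\G(t,\r)$ is that for every $V\subseteq V(H)$ with $|V|\geq\sqrt{t}$, at most $\sqrt{t}$ vertices have more than $2\r|V|$ neighbours in $V$. It then runs an induction on pairs of subgraphs $H_1,H_2\subseteq H$, bounding $r(H_1,H_2)$ by a function of $|V(H_1)|+|V(H_2)|$. Inside the induction it (i) strips off an exceptional clique to remove the few high-degree vertices, (ii) applies Lemma~\ref{Embedding} exactly as in Theorem~\ref{CliqueMax} to pass to a dense blue pair $(A,B)$, (iii) uses a random partition lemma to split $H'_1$ into two halves each of roughly half the maximum degree, (iv) embeds one half in $A'$ by induction, and (v) uses a K\H{o}v\'ari--S\'os--Tur\'an count to find many common blue neighbours of a large subset $T$ of that half. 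The random property of $H$ is used precisely here: it guarantees that after deleting the vertices of $H'_1$ already placed in $T$, the leftover graph $L$ still has controlled maximum degree (up to $\sqrt{t}$ exceptions), so the induction applies to $L$. This is where the $\log(2/\r)$ saving over Theorem~\ref{CliqueMax} comes from, and it is a genuinely recursive argument, not a one-pass embedding.
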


For the lower bound, note that graphs in $\mathcal{G}(t,\r)$, with $\r \geq \frac{1}{t}$, almost surely have at least $\frac{\r}{2} \binom{t}{2}$ edges. The usual random arguments now imply that the Ramsey number of a graph with this many edges is at least $2^{c \r t}$. So our results are again very close to being sharp. 

We will begin, in Section \ref{EmbeddingSection}, by discussing an embedding lemma, due to Graham, R\"odl and Ruci\'nski \cite{GRR00}, which will be a crucial component in all of our proofs. Roughly speaking, this lemma says that if the edges of a graph $G$ are well-distributed, in the sense that every two large bipartite graphs have at least a fixed positive density of edges between them, then $G$ contains a copy of any small graph $H$. In Section \ref{DenseSection}, we will prove Theorems \ref{IntroMain}, \ref{IntroCliqueMax} and \ref{IntroCliqueDense}. In Section \ref{RandomSection}, we prove Theorem \ref{IntroRandom}. We conclude with a number of open questions. Throughout the paper, we systematically omit floor and ceiling signs. We also do not make any serious attempt to optimize absolute constants in our statements and proofs. All logs, unless stated otherwise, are taken to the base 2.

\section{The embedding lemma} \label{EmbeddingSection}

Let $G$ be a graph on vertex set $V$ and let $X, Y$ be two subsets of $V$. Define $e(X,Y)$ to be the number of edges between $X$ and $Y$. The density of the pair $(X, Y)$ is
\[d(X,Y) = \frac{e(X,Y)}{|X||Y|}.\]
The graph $G$ is said to be {\it bi-$(\s, \d)$-dense} if, for all $X, Y \subset V$ with $X \cap Y = \emptyset$ and $|X|, |Y| \geq \s |V|$, we have $d(X,Y) \geq \d$. It was proven by Graham, R\"{o}dl and Ruci\'nski \cite{GRR00} that if $\s$ is sufficiently small depending on $\d$ and the maximum degree of a fixed graph $H$ then a sufficiently large bi-$(\s, \d)$-dense graph $G$ must contain a copy of $H$. For the sake of completeness, we include a proof of their embedding lemma. 

\begin{lemma} \label{Embedding}
Let $\d > 0$ be a real number. If $G$ is a bi-$(\frac{1}{4} \d^{\D} \D^{-2}, \d)$-dense graph on at least $4 \d^{-\D} \D n$ vertices then $G$ contains a copy of any graph $H$ on $n$ vertices with maximum degree $\D$. 
\end{lemma}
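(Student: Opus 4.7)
The plan is to embed $H$ into $G$ greedily, one vertex at a time, maintaining for each not-yet-embedded vertex $v_j$ a candidate set $T_j \subseteq V(G)$ of vertices that could still serve as its image given the partial embedding so far. A subtlety is that bi-$(\sigma,\delta)$-density is stated only for \emph{disjoint} pairs of sets, so to apply it cleanly to pairs of candidate sets I first properly colour $V(H)$ with $\Delta+1$ colours by the greedy algorithm (possible since $\Delta(H) \leq \Delta$), partition $V(G)$ into $\Delta+1$ blocks $V_1,\ldots,V_{\Delta+1}$ each of size at least $N/(\Delta+1)$ where $N := |V(G)|$, and insist that $T_j$ remain inside the block $V_{c(v_j)}$ corresponding to the colour of $v_j$. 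Candidate sets belonging to adjacent vertices of $H$ then automatically live in different blocks and so are disjoint.

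Writing $a_j^{(i)}$ for the number of already-embedded neighbours of $v_j$ after step $i$, the invariant I will maintain is
\[|T_j^{(i)}| \geq \delta^{a_j^{(i)}} \cdot \frac{N}{\Delta+1}.\]
Given a partial embedding $v_1,\ldots,v_i \mapsto w_1,\ldots,w_i$, I want to pick $w_{i+1} \in T_{i+1}^{(i)}$. Call $w \in T_{i+1}^{(i)}$ \emph{bad for $j$} (where $j > i+1$ and $v_j \sim v_{i+1}$) if $|N(w) \cap T_j^{(i)}| < \delta |T_j^{(i)}|$. Since the bad set and $T_j^{(i)}$ lie in different blocks and both will be seen to have size at least $\sigma N$, a standard averaging contradiction using bi-density forces the bad set to have size less than $\sigma N$: otherwise, applying bi-density would demand at least $\delta |\text{bad}| \cdot |T_j^{(i)}|$ edges between them, contradicting the definition of bad. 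Ruling out the fewer than $\Delta \sigma N$ bad vertices across the at most $\Delta$ new neighbours, together with the $i \leq n$ previously-used images, I can find a valid $w_{i+1}$ provided $|T_{i+1}^{(i)}| > \Delta \sigma N + n$; setting $T_j^{(i+1)} := T_j^{(i)} \cap N(w_{i+1})$ for each new neighbour then preserves the invariant.

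It remains to check that the hypothesised parameters close this loop. Substituting $\sigma = \delta^{\Delta} \Delta^{-2}/4$ gives $\Delta \sigma N = \delta^{\Delta} N/(4\Delta)$, and $N \geq 4 \delta^{-\Delta} \Delta n$ gives $n \leq \delta^{\Delta} N/(4\Delta)$, so the forbidden count is at most $\delta^{\Delta} N/(2\Delta)$; this is strictly less than the invariant's lower bound $\delta^{\Delta} N/(\Delta+1)$ on $|T_{i+1}^{(i)}|$ whenever $\Delta \geq 2$, while $\Delta = 1$ reduces to a matching and can be handled directly. The same arithmetic (using $4\Delta^2 \geq \Delta+1$) also confirms $|T_j^{(i)}| \geq \sigma N$ throughout, legitimising the invocation of bi-density in the averaging step. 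Iterating the greedy choice for $n$ steps produces the desired embedded copy of $H$. The principal delicate point is the disjointness condition needed to invoke bi-density, which is precisely what the preliminary colour-partition of $V(G)$ is designed to furnish; the remainder is bookkeeping.
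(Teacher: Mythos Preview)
Your proof is correct and follows essentially the same approach as the paper: a $(\Delta+1)$-colouring of $H$, a matching partition of $V(G)$ into $\Delta+1$ blocks to guarantee disjointness of candidate sets for adjacent vertices, and a greedy embedding maintaining the invariant $|T_j| \geq \delta^{a_j}\cdot(\text{block size})$, with the bi-density hypothesis bounding the number of bad vertices at each step. The only cosmetic differences are in bookkeeping (the paper normalises so that each block has size exactly $N$ and subtracts the already-used images at the end rather than folding them into the forbidden count), and your separate handling of $\Delta=1$ is in fact unnecessary once the strict inequality on the bad-set size is used, but this is harmless.
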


\begin{proof}
Let $V$ be the vertex set of $G$ and suppose without loss of generality that $|V| = (\D + 1) N$, where $N \geq 2 \d^{-\D} n$. Split $V$ into $\D + 1$ pieces $V_1, V_2, \cdots, V_{\D+1}$, each of size $N$. Since the chromatic number of $H$ is at most $\D + 1$, we may split its set of vertices $W$ into $\D + 1$ independent sets $W_1, W_2, \cdots, W_{\D + 1}$. We will give an embedding $f$ of $H$ in $G$ so that $f(W_i) \subset V_i$ for all $1 \leq i \leq \D + 1$.

Let the vertices of $H$ be $\{w_1, w_2, \cdots, w_n\}$. For each $1 \leq h \leq n$, let $L_h = \{w_1, w_2, \cdots, w_h\}$. For each $y \in W_j \char92 L_h$, let $T_y^h$ be the set of vertices in $V_j$ which are adjacent to all already embedded neighbours of $y$. That is, letting $N_h (y) = N(y) \cap L_h$, $T_y^h$ is the set of vertices in $V_j$ adjacent to each element of $f(N_h (y))$. We will find, by induction, an embedding of $L_h$ such that, for each $y \in W \char92 L_h$, $|T_y^h| \geq \d^{|N_h(y)|} N$. 

For $h = 0$, there is nothing to prove. We may therefore assume that $L_h$ has been embedded consistent with the induction hypothesis and attempt to embed $w = w_{h+1}$ into an appropriate $v \in T_w^h$. Let $Y$ be the set of neighbours of $w$ which are not yet embedded. We wish to find an element $v \in T_w^h \char92 f(L_h)$ such that, for all $y \in Y$, $|N(v) \cap T_y^h| \geq \d |T_y^h|$. If such a vertex $v$ exists, taking $f(w) = v$ will then complete the proof.

Let $B_y$ be the set of vertices in $T_w^h$ which are bad for $y \in Y$, that is, such that $|N(v) \cap T_y^h| < \d |T_y^h|$. Note that, by the induction hypothesis, $|T_y^h| \geq \d^{\D} N \geq \frac{1}{2} \d^\D \D^{-1} |V|$. Therefore, $|B_y| < \frac{1}{4} \d^{\D} \D^{-2} |V| \leq \frac{1}{2} \d^\D \D^{-1} N$, for otherwise the density between the sets $B_y$ and $T_y^h$ would be less than $\d$, contradicting the bi-density condition. Hence, since $N \geq 2 \d^{-\D} n$,
\[\left| T_w^h \char92 \cup_{y \in Y} B_y \right| > \d^\D N - \D \frac{1}{2} \d^\D \D^{-1} N \geq n.\]
Hence, since at most $n$ vertices have already been embedded, an appropriate choice for $f(w)$ exists. 
\end{proof}

\section{Dense graphs} \label{DenseSection}

We shall begin by proving Theorem \ref{IntroCliqueMax}. The two main ingredients in the proof are Lemma \ref{Embedding} and the observation, due to Erd\H{o}s and Szemer\'edi \cite{ES72}, that if one of the colours in a two-coloured graph is known to have high density then it must contain a much larger clique than one would normally expect. We will not actually apply the Erd\H{o}s-Szemer\'edi result directly, but the underlying moral of their result is crucial to the proof.

\begin{theorem} \label{CliqueMax}
Let $H$ be a graph on $t$ vertices with maximum degree $\r t$. Then, provided $\r \leq \frac{1}{16}$,
\[r(K_t, H) \leq 2^{12 \r \log^2(2/\r) t}.\]
\end{theorem}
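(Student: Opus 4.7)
My plan is to apply Lemma \ref{Embedding} to the blue graph with $\delta := \rho$ and $\Delta := \rho t$, so that any blue bi-$(\sigma, \rho)$-dense subgraph on at least $M := 4\rho^{-\rho t}\rho t^2$ vertices contains a blue copy of $H$, where $\sigma := \frac{1}{4}\rho^{\rho t}(\rho t)^{-2}$. Both $\log(1/\sigma)$ and $\log M$ are bounded by $\rho t\log(2/\rho) + O(\log t)$. Colouring $K_N$ with $N = 2^{12\rho t\log^2(2/\rho)}$, I assume there is no blue copy of $H$ and aim to produce a red $K_t$.

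The proof is an iterative clique extension. I maintain a partial red clique $K$ together with its common red neighborhood $V$, starting with $K = \emptyset$ and $V = V(K_N)$. As long as $|V| \geq M$, the blue graph on $V$ cannot be bi-$(\sigma, \rho)$-dense (otherwise we would embed $H$), so there exist disjoint $X, Y \subset V$, each of size $\sigma |V|$, with blue cross-density less than $\rho$ and hence red cross-density at least $1 - \rho$. Invoking the Erd\H{o}s--Szemer\'edi philosophy --- that a near-complete red graph must contain a much larger clique than the standard Ramsey bound would suggest --- I aim to extract from within $X \cup Y$ a red clique $S$ of size $s := t/\log(2/\rho)$, together with a common red neighborhood inside $V$ of size $\Omega(\sigma |V|)$. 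Replacing $K$ by $K \cup S$ and $V$ by $V \cap \bigcap_{v\in S} N_R(v)$, the clique grows by $s$ while $|V|$ shrinks by a factor $O(1/\sigma)$. After $\log(2/\rho)$ such stages $|K|$ has reached $t$, producing the desired red $K_t$ and a contradiction; meanwhile $|V|$ has shrunk by only $(1/\sigma)^{\log(2/\rho)} = 2^{O(\rho t\log^2(2/\rho))}$, comfortably below $N/M$ by the choice of $N$.

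The main obstacle lies in the extraction step: bi-density failure controls only the bipartite cross-density between $X$ and $Y$, while the red densities \emph{within} each part are a priori unconstrained, so one cannot hope to find a red clique inside $X \cup Y$ by Tur\'an-type reasoning alone. To get around this I would iterate the bi-density failure one level deeper, applying the same dichotomy inside $X$ (which is itself still well above the threshold $M$) to produce a sub-pair $(X_1, X_2) \subset X$ with red cross-density at least $1 - \rho$, and then combine the three pieces $X_1, X_2, Y$, which now have pairwise high red bipartite density. Continuing this nested refinement, one accumulates a growing collection of pairwise red-dense pieces inside which a greedy or dependent-random-choice argument should finally yield the required red clique of size $s$. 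Balancing the depth of this inner recursion against the resulting vertex loss in $V$ is the delicate combinatorial accounting at the heart of the proof.
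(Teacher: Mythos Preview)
Your outer-loop bookkeeping is fine, but the extraction step has a real gap, and I do not see how the sketched inner recursion closes it within budget.

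First, the specific claim that after refining $X$ into $(X_1,X_2)$ the three pieces $X_1,X_2,Y$ have \emph{pairwise} high red cross-density is not justified: $X_1$ and $X_2$ are subsets of $X$ of relative size only $\sigma$, and knowing that $d_R(X,Y)\ge 1-\rho$ on average says nothing about $d_R(X_1,Y)$ or $d_R(X_2,Y)$. This can be repaired by first passing to the subset $X'\subset X$ of vertices with red degree at least $(1-2\rho)|Y|$ into $Y$ (so $|X'|\ge \rho|X|$) and only then invoking bi-density failure inside $X'$; any subset of $X'$ then inherits red density $\ge 1-2\rho$ toward $Y$.

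The more serious problem is the budget. To produce a red clique of size $s=t/\log(2/\rho)$ by ``accumulating pairwise red-dense pieces'' you need on the order of $s$ such pieces, hence on the order of $s$ nested applications of bi-density failure, each shrinking the current set by a factor $\Theta(\sigma)$. Over your $\log(2/\rho)$ outer stages that is $\Theta(t)$ applications in total, for a cumulative loss of roughly $(1/\sigma)^{t}=2^{\Theta(\rho t^2\log(2/\rho))}$, hopelessly larger than the target $2^{O(\rho t\log^2(2/\rho))}$. A greedy or dependent-random-choice finishing step does not help, because the bottleneck is generating the pairwise-dense family in the first place; nothing in your outline provides a mechanism that yields an $s$-clique after only $O(1)$ invocations of bi-density failure.

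The paper avoids this by structuring the argument as an induction on the clique size $s$, proving $r(K_s,H)\le (2s/\rho t)^{12\rho\log(2/\rho)t}$ for all $s\ge \rho t$. At level $s_0$ one finds the pair $(A,B)$ with blue cross-density $\ge 1-\rho$ exactly as you do, then passes to $A'\subset A$ of high blue degree into $B$; the induction hypothesis (not a fresh inner recursion) furnishes a blue $K_{2s_0/3}$ inside $A'$. The step you are missing is a K\H{o}v\'ari--S\'os--Tur\'an count of blue stars $K_{1,l}$ with $l=s_0/2$ from $B$ into this clique: convexity forces some $l$-subset of the $K_{2s_0/3}$ to have at least $r(K_{s_0/2},H)$ common blue neighbours in $B$, and one inducts again there. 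Because $s$ shrinks by a constant factor at each level, the recursion has depth $O(\log(1/\rho))$ rather than $\Theta(t/\log(2/\rho))$, and the single $1/\sigma$ loss per level compounds to exactly $2^{O(\rho t\log^2(2/\rho))}$. The KST counting step is the device that converts one bi-density failure into a constant-factor advance on the clique; without it, or an equivalent, the accounting you flag as ``delicate'' does not balance.
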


\begin{proof}
We shall prove, by induction on $s$, that for $s \geq \r t$, 
\[r(K_s, H) \leq \left(\frac{2s}{\r t}\right)^{12 \r \log(2/\r) t}.\]
The result follows from taking $s = t$.

The base case, $s = \r t$, is easy, since 
\[r(K_s, H) \leq r(K_s, K_t) \leq \binom{s+t}{s} = \binom{(1+\r)t}{\r t} \leq \left(\frac{e(1+\r)}{\r}\right)^{\r t} \leq \left(\frac{2}{\r}\right)^{2 \r t}.\]
Suppose, therefore, that the result is true for all $s < s_0$ and we wish to prove it for $s_0$.

Let $G$ be a graph on 
$$N = \left(\frac{2 s_0}{\r t}\right)^{12 \r \log(2/\r) t}$$ 
vertices whose edges are two-coloured in red and blue. By Lemma \ref{Embedding} with $\d = \r$, if the red subgraph is bi-$(\frac{1}{4} \r^{\r t} t^{-2}, \r)$-dense and $N \geq 4 \r^{-\r t} t^2$, there is a  copy of $H$ in red. We may therefore assume otherwise. That is, there exist two sets $A$ and $B$, each of size at least $\frac{1}{4} \r^{\r t} t^{-2} N$, such that the density of red edges between $A$ and $B$ is less than $\r$. Looking at it another way, the density of blue edges between $A$ and $B$ is at least $1 - \r$. 

Note now that there exists $A' \subseteq A$ such that $|A'| \geq \r |A|$ and, for each $v \in A'$, the blue degree $d_{B} (v)$ of $v$ in $B$ is at least $(1 - 2 \r)|B|$. Suppose otherwise. Then the density of edges between $A$ and $B$ is less than
\[(1 - \r)(1 - 2 \r) + \r \leq 1 - \r,\]
a contradiction. 

Letting $P = 12 \r \log(2/\r) t$, note that, since $(3/2)^2 \geq 2$,
\begin{eqnarray*}
|A'| \geq \r |A| & \geq & \frac{1}{4} \r^{\r t + 1} t^{-2} \left(\frac{2 s_0}{\r t}\right)^{12 \r \log(2/\r) t}
\geq \frac{1}{4} \r^{\r t + 1} t^{-2} \left(\frac{2 s_0}{\frac{4}{3} s_0}\right)^P \left(\frac{\frac{4}{3} s_0}{\r t}\right)^P\\
& \geq & \frac{1}{4} \r^{\r t + 1} t^{-2} \left(\frac{3}{2}\right)^P \left(\frac{\frac{4}{3} s_0}{\r t}\right)^P
\geq \frac{1}{4} \r^{\r t + 1} t^{-2} \left( \frac{2}{\r} \right)^{6 \r t} \left(\frac{\frac{4}{3} s_0}{\r t}\right)^P\\
& \geq & \left(\frac{2}{\r}\right)^{2\r t} \left(\frac{2 \left(\frac{2}{3} s_0\right)}{\r t}\right)^{12 \r \log(2/\r) t}.
\end{eqnarray*}
The last line follows since, for $\frac{1}{t} \leq \r \leq \frac{1}{e}$ (the former being a necessary condition for the graph to have no isolated vertices), the function $\r^{-3 \r t + 1}$ is increasing and, therefore, the inequality $2^{4 \r t - 2} \r^{-3\r t+1} \geq t^2$ holds. Therefore, by induction, $A'$ contains either a blue clique on $\frac{2}{3} s_0$ vertices or a red copy of $H$. Note that the extra $(2/\r)^{2 \r t}$ factor is there to account for the possibility that $\frac{2}{3} s_0$ is smaller than $\r t$. We may assume that $A'$ contains a blue clique $S$ of size $\frac{2}{3} s_0$.

By choice, every element of $A'$ has blue degree at least $(1 - 2 \r)|B|$ in $B$. Hence, the blue density between $S$ and $B$ is at least $1 - 2\r$. Following the usual K\H{o}v\'ari-S\'os-Tur\'an argument \cite{KST54}, we count the number of blue copies of the bipartite graph $K_{1, l}$, where the single vertex lies in $B$ and the collection of $l$ vertices lies in $S$. If this set has size at least $\binom{|S|}{l} r(K_{s_0 - l}, H)$, we are done. To see this, note that the condition implies the existence of a blue $K_l$ all of whose vertices are joined, in blue edges, to every vertex in a set of size $r(K_{s_0 - l}, H)$. This latter set contains either a red copy of $H$, in which case we are done, or a blue $K_{s_0 - l}$. If we add this latter set to the blue $K_l$ we get a blue $K_{s_0}$, so we are again done.

Let $l = \frac{1}{2} s_0$. We are going to show that for this choice of $l$, the number of $K_{1,l}$ is at least $\binom{|S|}{l} r(K_{s_0 - l}, H)$. To prove this, let $d_S(v)$ be the degree of a vertex $v$ from $B$ in $S$. Note, by convexity, that the number of $K_{1, l}$ is at least 
\[\sum_{v \in B} \binom{d_S(v)}{l} \geq |B| \binom{\frac{1}{|B|} \sum_{v \in B} d_S(v)}{l}
\geq |B| \binom{(1-2\r)|S|}{l}.\]
Note that, since $|S| = \frac{2}{3} s_0$ and $l = \frac{1}{2} s_0$, we have $|S| - l = \frac{1}{6} s_0 = \frac{1}{4} |S|$. Therefore,
\[\binom{(1-2\r)|S|}{l}/\binom{|S|}{l} = \prod_{i=0}^{l-1} \left(\frac{(1-2\r)|S| - i}{|S| - i}\right) \geq \prod_{i=0}^{l-1} \left(1 - \frac{2 \r |S|}{|S|-i} \right) \geq (1 - 8 \r)^l.\]
Therefore, since $l \leq t$, $\r \leq \frac{1}{16}$ and, for $0 \leq x \leq \frac{1}{2}$, we have $1 - x \geq 2^{-2x}$, 
\begin{eqnarray*}
|B| \frac{\binom{(1-2\r)|S|}{l}}{\binom{|S|}{l}} & \geq & \frac{1}{4} \r^{\r t} t^{-2} (1 - 8\r)^t \left(\frac{2 s_0}{\r t}\right)^{12 \r \log(2/\r) t}\\
& \geq & \frac{1}{4} \r^{\r t} t^{-2} 2^{-16 \r t} \left(\frac{2}{\r}\right)^{12 \r t} \left(\frac{s_0}{\r t}\right)^{12 \r \log(2/\r) t}\\
& \geq & \left(\frac{2}{\r}\right)^{2 \r t} \left(\frac{2\left(\frac{1}{2}s_0\right)}{\r t}\right)^{12 \r \log(2/\r) t}.
\end{eqnarray*}  
The last line follows since, for $\frac{1}{t} \leq \r \leq \frac{1}{2 e}$, the function $(2 \r)^{-9 \r t}$ is increasing, and, therefore, the inequality $2^{-6 \r t - 2} \r^{-9\r t} \geq (2 \r)^{-9 \r t} \geq t^2$ holds. By the induction hypothesis and the fact that $s_0 - l = \frac{1}{2} s_0$, this is greater than $r(K_{s_0 - l}, H)$ and the theorem is therefore proven. 
\end{proof}

By appropriately adjusting a method of Alon, Krivelevich and Sudakov \cite{AKS03}, we may now prove Theorem \ref{IntroCliqueDense} as a corollary of Theorem \ref{CliqueMax}.

\begin{corollary} \label{CliqueDense}
Let $H$ be a graph on $t$ vertices with density $\r$. Then, provided $\r \leq \frac{1}{50}$,
\[r(K_t, H) \leq 2^{15 \sqrt{\r} \log^{3/2} (2/\r) t}.\]
\end{corollary}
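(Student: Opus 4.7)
The plan is to deduce Corollary~\ref{CliqueDense} from Theorem~\ref{CliqueMax} by splitting $V(H)$ into high- and low-degree parts, in the spirit of Alon, Krivelevich and Sudakov~\cite{AKS03}. Set $\r^{\ast}=C\sqrt{\r/\log(2/\r)}$ for an appropriate constant $C$, and let $V_H\subseteq V(H)$ consist of all vertices of degree greater than $\r^{\ast}t$. The handshake bound $\sum_{v}\deg(v)=2m$ gives $k:=|V_H|\leq \r t/\r^{\ast}$, which is of order $\sqrt{\r\log(2/\r)}\,t$, and the subgraph $H_2:=H-V_H$ has at most $t$ vertices and maximum degree at most $\r^{\ast}t$. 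Theorem~\ref{CliqueMax} therefore yields $r(K_t,H_2)\leq 2^{12\r^{\ast}\log^2(2/\r^{\ast})t}$; since $\log(2/\r^{\ast})$ is of the same order as $\log(2/\r)$, this is at most $2^{C'\sqrt{\r}\log^{3/2}(2/\r)\,t}$ for a constant $C'$ depending on $C$.

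Set $N:=2^{15\sqrt{\r}\log^{3/2}(2/\r)\,t}$ and fix any red/blue colouring of $K_N$ with no blue $K_t$. The core reduction is to find a red $K_k$ in $K_N$ whose common red-neighbourhood $W$ satisfies $|W|\geq r(K_t,H_2)$. Given such a structure, the absence of blue $K_t$ forces a red copy of $H_2$ inside $W$; the vertices of $V_H$ embed into the red $K_k$ (legal because $H[V_H]\subseteq K_k$), and every edge of $H$ between $V_H$ and $V(H)\setminus V_H$ maps to an edge between $V(K_k)$ and $W$, which is red by construction of $W$. This assembles a red copy of $H$ and gives the required contradiction.

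To produce the red $K_k$ together with a common red-neighbourhood of size at least $r(K_t,H_2)$, I would re-run the density dichotomy from the proof of Theorem~\ref{CliqueMax}, adapted to this book-type target and organised as an induction on the clique parameter $s$. At each stage, either the red graph on the current host is bi-$(\s,\r^{\ast})$-dense with $\s$ small enough that a short dependent-random-choice sequence produces $k$ vertices forming a red clique whose common red-neighbourhood still has size at least $(\r^{\ast})^{k}$ times that of the host, and the choice of $N$ arranges this to exceed $r(K_t,H_2)$; or there is a bipartite pair $(A,B)$ of large subsets with blue density exceeding $1-\r^{\ast}$, from which the K\H{o}v\'ari--S\'os--Tur\'an clique-doubling step used in the proof of Theorem~\ref{CliqueMax} inductively builds a larger blue clique, eventually contradicting the no-blue-$K_t$ assumption.

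The main obstacle is parameter bookkeeping. The exponents contributed by $r(K_t,H_2)$ and by the dependent-random-choice book-search must fit together inside the $15\sqrt{\r}\log^{3/2}(2/\r)\,t$ budget: the choice $\r^{\ast}\asymp\sqrt{\r/\log(2/\r)}$ balances the pair at a total of order $\sqrt{\r}\log^{3/2}(2/\r)\,t$, but one has to take care to avoid the extra $\log t$ factor that a naive iterative ``pick the maximum-red-degree vertex'' search would incur by shrinking the host by a factor of $t-1$ at each step when $\r$ is close to $1/t$.
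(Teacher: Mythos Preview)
Your high-level strategy coincides with the paper's: split off the $k\approx\sqrt{\r\log(2/\r)}\,t$ vertices of degree exceeding $\r^\ast t\approx\sqrt{\r/\log(2/\r)}\,t$, apply Theorem~\ref{CliqueMax} to the bounded-degree remainder $H'$ (your $H_2$), and reduce to finding a red $K_k$ whose common red-neighbourhood has size at least $r(K_t,H')$. The reassembly of $H$ from the red $K_k$ and the red $H'$ is exactly as you describe.

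The gap is in your third paragraph, where you propose to locate the red book by re-running the bi-density/K\H{o}v\'ari--S\'os--Tur\'an machinery of Theorem~\ref{CliqueMax} as an induction on a clique parameter. This is left vague (how does bi-$(\s,\r^\ast)$-density of red, which controls edge counts between pairs of large sets, yield a red \emph{clique} with prescribed common neighbourhood via dependent random choice?), and you yourself flag the bookkeeping as unresolved. The paper closes this gap with a far more elementary device that dispenses with bi-density, KST, and induction altogether: an Erd\H{o}s--Szekeres neighbourhood chase with an \emph{asymmetric} threshold. At each step, pick any vertex $u$ of the current host $U_i$; if $u$ has at least $\r|U_i|$ red neighbours in $U_i$, pass to those and record $R$; otherwise $u$ has at least $(1-\r)|U_i|$ blue neighbours, pass to those and record $B$. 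Halt at $t-1$ $B$'s (blue $K_t$, done) or at $k$ $R$'s. In the latter case the $R$-vertices form a red $K_k$ joined in red to a set of size at least
\[
\r^{k}(1-\r)^{t-1}N \;\geq\; 2^{-\sqrt{\r}\log^{3/2}(2/\r)\,t}\,2^{-2\r t}\,N \;\geq\; 2^{-2\sqrt{\r}\log^{3/2}(2/\r)\,t}\,N,
\]
which with $N=2^{15\sqrt{\r}\log^{3/2}(2/\r)\,t}$ comfortably exceeds $r(K_t,H')$.

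This directly addresses the $\log t$ worry in your last paragraph: because each red step shrinks the host by the \emph{fixed} factor $\r$ rather than by a data-dependent degree, and each blue step by $1-\r\geq 2^{-2\r}$, there is no place for a $\log t$ loss to enter even when $\r$ is near $1/t$. The entire book-finding step is three lines, not an induction.
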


\begin{proof}
There are at most $t \sqrt{\r \log(2/\r)}$ vertices in $H$ with degree greater than $t \sqrt{\r/\log(2/\r)}$. Otherwise, the graph would contain more than $\r \binom{t}{2}$ edges, which would be a contradiction. Let $H'$ be the graph formed from $H$ by removing these vertices. By choice, it has maximum degree at most $t \sqrt{\r/\log(2/\r)}$. 

Consider a complete graph on $N = 2^{15 \sqrt{\r} \log^{3/2} (2/\r) t}$ vertices whose edges have been two-coloured in red and blue. We will construct a sequence of subsets of this vertex set $U_1 \supset U_2 \supset \cdots \supset U_l$ and a string $S$ consisting of $R$s and $B$s associated with this choice. To begin, let $u_1$ be an arbitrary vertex. If $u_1$ has at least $\r N$ neighbours in red, let $U_1$ be this set of neighbours and initalise the string as $S = R$. If, on the other hand, $u_1$ has at least $(1 - \r) N$ neighbours in blue, let $U_1$ be this set of neighbours and initialise the string as $S = B$. Suppose now that we have chosen $U_i$. Fix an arbitrary vertex $u_{i+1}$ in $U_i$. If $u_{i+1}$ has at least $\r |U_i|$ neighbours in red within $U_i$, we let $U_{i+1}$ be this set of neighbours and append an $R$ to our string $S$. Otherwise, we let $U_{i+1}$ be the set of blue neighbours and append $B$ to the end of the string. 

We stop our process when the string contains either $t-1$ occurrences of $B$ or $t \sqrt{\r \log(2/\r)}$ occurrences of $R$. If the first case occurs, there are $t - 1$ vertices $u_{i_1}, u_{i_2}, \cdots, u_{i_{t-1}}$ connected to each other and every vertex in the final set $U_l$ by blue edges. So, provided $U_l$ is non-empty, we have a blue $K_t$. If the second case occurs, there are, similarly, $t \sqrt{\r \log(2/\r)}$ vertices connected to each other and every vertex in $U_l$ by red edges. Note that, since $\r \leq \frac{1}{2}$ and $1 - x \geq 2^{-2x}$ whenever $0 \leq x \leq \frac{1}{2}$, $U_l$ has size at least 
\[\r^{t \sqrt{\r \log(2/\r)}} (1 - \r)^t N \geq 2^{-\sqrt{\r} \log^{3/2} (2/\r) t} 2^{-2 \r t} N \geq 2^{-2 \sqrt{\r} \log^{3/2} (2/\r) t} N.\]
Therefore, $|U_l| \geq 2^{13 \sqrt{\r} \log^{3/2} (2/\r) t} \geq 2^{12 \sqrt{\r} \log^{3/2} (2/\r) t} + t$. Since $H'$ has maximum degree $t \sqrt{\r/\log(2/\r)}$ and $\sqrt{\r/\log(2/\r)} \leq \frac{1}{16}$ whenever $\r \leq \frac{1}{50}$, Theorem \ref{CliqueMax} now tells us that the vertex set $U_l$ must contain either a blue copy of $K_t$ or a red copy of $H'$. The extra $t$ is needed so as to account for the fact that $H'$, unlike $H$, may have some isolated vertices. The result follows by adjoining this copy of $H'$, if it occurs, to the red clique of size $t \sqrt{\r \log(2/\r)}$ which is connected to $U_l$ by red edges.
\end{proof}

Theorem \ref{IntroMain} may now be proved in essentially the same manner as Corollary \ref{CliqueDense}. 

\begin{corollary} \label{DenseAllRange}
Let $H$ be a graph on $t$ vertices with density $\r$. Then, provided $\r \leq \frac{1}{16}$,
\[r(H) \leq 2^{15 \sqrt{\r} \log (2/\r) t}.\]
\end{corollary}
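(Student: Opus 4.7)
The plan is to follow the same high-level strategy as Corollary~\ref{CliqueDense}: run a sequential vertex-selection process to extract a monochromatic clique, then locate a monochromatic copy of the low-degree part $H'$ in the residual. The differences will be that the process is now symmetric---choosing the majority colour at each step---and that Lemma~\ref{Embedding} takes the place of Theorem~\ref{CliqueMax} in the terminal step, with an auxiliary extraction handling the bi-density-failure case.

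I will use the same decomposition as in Corollary~\ref{CliqueDense}: set $s = t\sqrt{\rho\log(2/\rho)}$ and $d = t\sqrt{\rho/\log(2/\rho)}$ (so $sd = \rho t^2$), and let $H'$ be $H$ with its at-most-$s$ vertices of degree greater than $d$ removed, so $H'$ has at most $t$ vertices and maximum degree at most $d$. In any two-colouring of $K_N$ with $N = 2^{15\sqrt{\rho}\log(2/\rho)t}$, I will run a majority-colour process: at each step pick $u_i \in U_{i-1}$ and let $U_i$ be the larger of $u_i$'s red and blue neighbourhoods in $U_{i-1}$, appending $R$ or $B$ accordingly, and stop when either the $R$- or the $B$-count first reaches $s$. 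Since each step loses at most a factor of $2$, we will have $|U_l| \geq 2^{-2s}N \geq 2^{13\sqrt{\rho}\log(2/\rho)t}$ (the excess $(15\log(2/\rho) - 2\sqrt{\log(2/\rho)})\sqrt{\rho}t$ is comfortably positive for $\rho \leq 1/16$), and without loss of generality the $R$-count reaches $s$ first, producing a red clique $R_c$ of size $s$ which is red-adjacent to every vertex of $U_l$.

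Since every edge from $R_c$ to $U_l$ is red, it will suffice to find either a red copy of $H'$ inside $U_l$ (which extends through $R_c$ to a red $H$, the at-most-$s$ high-degree vertices embedding into $R_c$) or a blue copy of $H$ outright. I plan to apply Lemma~\ref{Embedding} with $\delta = \rho$ and $\Delta = d$ to the red subgraph of $U_l$; the required vertex count $4\rho^{-d}dt$, of logarithmic order $\sqrt{\rho\log(2/\rho)}t$, fits easily inside $|U_l|$. Either the red subgraph is bi-$(\tfrac14\rho^d d^{-2},\rho)$-dense and we are done, or else there exist disjoint $A,B \subseteq U_l$ of size at least $\tfrac14\rho^d d^{-2}|U_l|$ on which the blue density is at least $1-\rho$. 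In the latter case I will build a blue $H$: first pass to $A' \subseteq A$ of size $\geq \rho|A|$ in which every vertex has blue degree $\geq (1-2\rho)|B|$ to $B$; apply Theorem~\ref{CliqueMax} inside $A'$ in its inductive form $r(K_{s'},H') \leq (2s'/d)^{12(d/t)\log(2t/d)t}$ to extract either a red $H'$ (combining with $R_c$ to finish in red) or a blue $K_{2s}$; a K{\H o}v{\'a}ri--S{\'o}s--Tur{\'a}n convexity step then finds a sub-clique $T' \subseteq K_{2s}$ of size $s$ whose common blue neighbourhood $B^* \subseteq B$ is still of exponential size; and a final application of Lemma~\ref{Embedding} to the blue subgraph of $B^*$ produces either a blue $H'$ (and hence a blue $H$ via $T'$) or a red-dense sub-region of $B^*$, in which case $R_c$ together with one further invocation of the embedding lemma on the sub-region yields a red $H$.

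The hard part will be checking that the exponential slack in $|U_l|$ absorbs all four compounded losses: the $2^{-2s}$ cost of the process, the $\rho^d$ threshold from each bi-density check, the bound on $r(K_{2s}, H')$ from Theorem~\ref{CliqueMax}, and the $(1-O(\rho))^s$ KST ratio. Each of these losses is of order $2^{O(\sqrt{\rho\log(2/\rho)}\,t)}$ up to polylogarithmic factors in $1/\rho$, so they are all $o(\sqrt{\rho}\log(2/\rho)t)$ and should fit comfortably inside the generous exponent $15\sqrt{\rho}\log(2/\rho)t$ of $N$, because $\sqrt{\log(2/\rho)}$ grows much more slowly than $\log(2/\rho)$ as $\rho \to 0$.
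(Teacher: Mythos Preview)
Your final step does not close, and the detour that leads you there is avoidable.

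The paper's proof uses a \emph{different} degree threshold than Corollary~\ref{CliqueDense}: it removes the at most $t\sqrt{\rho}\log(2/\rho)$ vertices of degree exceeding $d'' = t\sqrt{\rho}/\log(2/\rho)$. With that choice, after the symmetric majority-colour process one simply applies Theorem~\ref{CliqueMax} directly to $U_l$ with effective density $\rho'' = \sqrt{\rho}/\log(2/\rho)$, obtaining either a blue $K_t$ or a red $H'$. The point is that $\rho''\log^2(2/\rho'') \approx \tfrac{\sqrt{\rho}}{\log(2/\rho)}\cdot\tfrac14\log^2(2/\rho) = \tfrac14\sqrt{\rho}\log(2/\rho)$, so the bound from Theorem~\ref{CliqueMax} fits inside $|U_l|$ and the proof ends immediately. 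Your threshold $d = t\sqrt{\rho/\log(2/\rho)}$ gives instead $\rho'\log^2(2/\rho') \approx \tfrac14\sqrt{\rho}\log^{3/2}(2/\rho)$, which is too large by a factor of $\sqrt{\log(2/\rho)}$; this is precisely what forces you into the elaborate workaround.

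That workaround then breaks at the last line. When the blue subgraph of $B^*$ fails bi-density, you obtain disjoint $A_3,B_3\subseteq B^*$ with red density at least $1-\rho$ \emph{between} them. This is not red bi-density on any set, so ``one further invocation of the embedding lemma'' cannot produce a red $H'$. If you try to proceed as before --- pass to $A_3'$, find a red $K_{2s}$, extract a common red neighbourhood --- you are back in the same situation with colours swapped, and the process has no termination mechanism. Making it terminate would require an induction on some shrinking parameter, which is exactly what the proof of Theorem~\ref{CliqueMax} already packages; but with your value of $d$, invoking $r(K_t,H')$ directly costs $2^{\Theta(\sqrt{\rho}\log^{3/2}(2/\rho)\,t)}$, exceeding the budget $N$. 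Switching to the threshold $d'' = t\sqrt{\rho}/\log(2/\rho)$ fixes everything and collapses the argument to three lines after the process.
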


\begin{proof}
There are at most $t \sqrt{\r} \log(2/\r)$ vertices in $H$ with degree greater than $t \sqrt{\r}/\log(2/\r)$. Let $H'$ be the graph formed from $H$ by removing these vertices. By choice, it has maximum degree at most $t \sqrt{\r}/\log(2/\r)$. 

Consider a complete graph on $N = 2^{15 \sqrt{\r} \log (2/\r) t}$ vertices whose edges have been two-coloured in red and blue. As in the proof of Corollary \ref{CliqueDense}, we construct a sequence of subsets of this vertex set $U_1 \supset U_2 \supset \cdots \supset U_l$ and a string $S$ consisting of $R$s and $B$s associated with this choice. To begin, let $u_1$ be an arbitrary vertex. If $u_1$ has at least $N/2$ neighbours in red, let $U_1$ be this set of neighbours and initalise the string as $S = R$. If, on the other hand, $u_1$ has at least $N/2$ neighbours in blue, let $U_1$ be this set of neighbours and initialise the string as $S = B$. Suppose now that we have chosen $U_i$. Fix an arbitrary vertex $u_{i+1}$ in $U_i$. If $u_{i+1}$ has at least $|U_i|/2$ neighbours in red within $U_i$, we let $U_{i+1}$ be this set of neighbours and append an $R$ to our string $S$. Otherwise, we let $U_{i+1}$ be the set of blue neighbours and append $B$ to the end of the string. 

We stop our process when the string contains $\sqrt{\r} \log(2/\r) t$ occurrences of either $R$ or $B$. In either case, there are $d = \sqrt{\r} \log(2/\r) t$ vertices $u_{i_1}, u_{i_2}, \cdots, u_{i_d}$ which are all connected to each other and every vertex in the final set $U_l$ in one particular colour. Suppose, without loss of generality, that this colour is red. Therefore, if $U_l$ contains a blue clique of size $t$ or a red copy of $H'$, we will be done.  

To see that this is indeed the case, note that 
\[|U_l| \geq 2^{-2 \sqrt{\r}\log(2/\r)t} N \geq 2^{13 \sqrt{\r} \log(2/\r) t} \geq 2^{12 \sqrt{\r} \log(2/\r) t} + t.\]
Since $H'$ has maximum degree $t \sqrt{\r}/\log(2/\r)$ and $\sqrt{\r}/\log(2/\r) \leq \frac{1}{16}$ whenever $\r \leq \frac{1}{16}$, Theorem \ref{CliqueMax} now tells us that the vertex set $U_l$ must contain either a blue copy of $K_t$ or a red copy of $H'$. The extra $t$ is needed so as to account for the fact that $H'$ may have some isolated vertices. The result follows by adjoining this copy of $H'$, if it occurs, to the red clique of size $t \sqrt{\r} \log(2/\r)$ which is connected to $U_l$ by red edges.
\end{proof}

As we noted in the introduction, substituting $\r = m/\binom{t}{2}$ and using the fact that, for graphs with no isolated vertices, $t \leq 2 m$, this yields another proof that $r(H) \leq 2^{c \sqrt{m} \log m}$ for graphs with $m$ edges. 

\section{Random graphs} \label{RandomSection}

In this section, we will prove Theorem \ref{IntroRandom}. A key component of our proofs is a lemma saying that one may partition a graph into two pieces of comparable size such that the maximum degree within each of the induced subgraphs is approximately half the maximum degree of the original graph. To prove this, we will need the following estimate for the upper tail of the binomial distribution.

\begin{lemma} \label{Chernoff}
Let $X$ be a random variable that is binomially distributed with parameters $n$ and $p$ and let $0 \leq \theta \leq 1$ be a real number. Then $\mathbb{P}[X \geq (1 + \theta) pn] \leq e^{-\theta^2 pn/4}$.
\end{lemma}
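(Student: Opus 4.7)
The plan is to prove this by the standard Chernoff bound argument, applying Markov's inequality to $e^{sX}$ and then optimizing over $s > 0$. Writing $X = X_1 + \cdots + X_n$ as a sum of independent Bernoulli($p$) variables, the moment generating function factorizes as $\mathbb{E}[e^{sX}] = (1 - p + pe^s)^n$, and since $1 + y \leq e^y$ for all real $y$ this is at most $\exp\bigl(np(e^s - 1)\bigr)$. Markov's inequality therefore gives
\[
\mathbb{P}[X \geq (1+\theta)pn] \leq e^{-s(1+\theta)pn}\,\mathbb{E}[e^{sX}] \leq \exp\bigl(np(e^s - 1 - s(1+\theta))\bigr).
\]

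The next step is to choose the free parameter $s$ so as to minimize the right-hand side. The optimal choice is $s = \ln(1+\theta)$, which is legitimately positive for $\theta > 0$ (the case $\theta = 0$ being trivial since the right-hand side of the lemma is $1$). With this substitution $e^s - 1 = \theta$, and the bound collapses to
\[
\mathbb{P}[X \geq (1+\theta)pn] \leq \exp\bigl(-np\bigl((1+\theta)\ln(1+\theta) - \theta\bigr)\bigr).
\]

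What remains, and is really the only non-mechanical ingredient, is the elementary inequality $(1+\theta)\ln(1+\theta) - \theta \geq \theta^2/4$ for $\theta \in [0,1]$. My plan for this is a two-derivative calculus check: setting $f(\theta) = (1+\theta)\ln(1+\theta) - \theta - \theta^2/4$, one computes $f(0) = 0$, $f'(\theta) = \ln(1+\theta) - \theta/2$ with $f'(0) = 0$, and $f''(\theta) = (1-\theta)/(2(1+\theta))$, which is nonnegative precisely on $[0,1]$. Therefore $f' \geq 0$ on this interval, whence $f$ is nondecreasing and $f \geq 0$, yielding the desired inequality. The role of the hypothesis $\theta \leq 1$ is exactly to keep $f''$ nonnegative; for larger $\theta$ the quadratic approximation stops dominating and one would have to either weaken the constant $1/4$ or argue differently, but this is the only delicate point in an otherwise entirely routine proof.
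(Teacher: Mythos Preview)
Your proof is correct and essentially identical to the paper's: both arrive at the bound $\exp\bigl(-np((1+\theta)\ln(1+\theta)-\theta)\bigr)$ (the paper just quotes the Chernoff bound $\bigl(e^{\theta}/(1+\theta)^{1+\theta}\bigr)^{pn}$ rather than rederiving it from Markov), and then both establish $(1+\theta)\ln(1+\theta)-\theta \ge \theta^2/4$ on $[0,1]$ by the same two-derivative comparison, using that $1/(1+\theta) \ge 1/2$ there.
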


\begin{proof}
A bound for the upper tail follows from the standard Chernoff bound
\[\mathbb{P}[X \geq (1+\theta) pn] \leq \left(\frac{e^{\theta}}{(1+\theta)^{1+\theta}}\right)^{p n}.\]
If, for $0 \leq \theta \leq 1$, we can show that $(1 + \theta)^{1+\theta} \geq e^{\theta +  \theta^2/4}$, this bound becomes simply $e^{-\theta^2 pn/4}$. Taking logs to the base $e$, it is sufficient to show that
\[(1+\theta) \log(1+\theta) \geq \theta + \frac{\theta^2}{4}.\] 
This clearly holds for $\theta = 0$. It is therefore sufficient to show that in the range $0 \leq \theta \leq 1$ the derivative, $1 + \log(1 + \theta)$, of the left hand side is at least the derivative, $1 + \theta/2$, of the right hand side. Again, these two expressions are equal at $\theta = 0$, so it is sufficient to show that the second derivative, $1/(1+\theta)$, of the left hand side is greater than or equal to the derivative, $1/2$, of the right hand side. But this follows easily from the condition $0 \leq \theta \leq 1$.
\end{proof}

We are now ready to prove our partitioning lemma.

\begin{lemma} \label{JudPart}
Let $H$ be a graph on $t$ vertices with maximum degree $\d t$. Then, provided that $\d \geq 64 \log t/t$ and $t \geq 16$, there is a partition of the graph into two vertex sets $V_1$ and $V_2$ such that, for $i = 1, 2$,
\[\left||V_i| - \frac{t}{2}\right| \leq 2 \sqrt{t}\]
and the maximum degree of any vertex into each of the vertex sets $V_1$, $V_2$ is at most 
\[\frac{\d}{2} t + 2 \sqrt{\d t \log t}.\]
\end{lemma}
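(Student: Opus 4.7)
The plan is to use the probabilistic method: place each vertex of $H$ into $V_1$ or $V_2$ independently and uniformly at random, then show that both conditions hold simultaneously with positive probability.

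For the balance condition, $|V_1|$ is a $\mathrm{Bin}(t, 1/2)$ variable with variance $t/4$, so Chebyshev's inequality gives
\[
\mathbb{P}\!\left[\bigl||V_1| - t/2\bigr| \geq 2\sqrt{t}\right] \leq \frac{t/4}{4t} = \frac{1}{16},
\]
and the corresponding estimate for $V_2$ is automatic because $|V_2| = t - |V_1|$.

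For the degree condition, fix a vertex $v$ and an index $i \in \{1,2\}$ and write $X = |N(v) \cap V_i|$. Since $d(v) \leq \d t$, $X$ is stochastically dominated by a $\mathrm{Bin}(\d t, 1/2)$ variable of mean $\d t/2$, so it suffices to bound the tail of the latter. I would apply Lemma \ref{Chernoff} with $n = \d t$, $p = 1/2$, and $\theta = 4\sqrt{\log t /(\d t)}$, chosen precisely so that $(1+\theta)pn$ equals the target threshold $\d t/2 + 2\sqrt{\d t \log t}$. The hypothesis $\d \geq 64 \log t / t$ yields $\theta \leq 1/2 \leq 1$, so Lemma \ref{Chernoff} applies and gives a failure probability of at most $e^{-\theta^2 pn / 4} = e^{-2 \log t}$. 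Since the paper's $\log$ is base $2$ and $e^2 > 4$, this is at most $t^{-2}$, and a union bound over the $2t$ pairs $(v,i)$ bounds the overall degree-failure probability by $2/t \leq 1/8$ whenever $t \geq 16$.

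Adding the two contributions, the total failure probability is at most $1/16 + 1/8 < 1$, so a partition with the required properties exists. The one delicate point is calibrating $\theta$ so that the Chernoff deviation matches $2\sqrt{\d t \log t}$ while remaining in the sub-Gaussian regime $\theta \in [0,1]$ where Lemma \ref{Chernoff} applies; the hypothesis $\d \geq 64 \log t/t$ is exactly what makes this work, and the rest of the argument is routine.
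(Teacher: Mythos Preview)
Your proof is correct and follows the same approach as the paper: a uniform random bipartition together with concentration for the degrees. The only cosmetic differences are that the paper uses Chernoff (Lemma~\ref{Chernoff}) rather than Chebyshev for the balance condition, and applies Chernoff directly to $\mathrm{Bin}(d(v),1/2)$ (splitting off the trivial case $d(v)<16\log t$) instead of invoking stochastic domination by $\mathrm{Bin}(\delta t,1/2)$; your domination argument is a tidy way to avoid that case split.
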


\begin{proof}
We will partition the graph randomly by choosing each vertex to be in $V_1$ independently with probability $1/2$. Applying Lemma \ref{Chernoff} with $p = 1/2$, $n = t$ and $\theta = 4/\sqrt{t}$ (which is less than $1$ for $t \geq 16$), we see that, for $i = 1, 2$,  
\[\mathbb{P} [|V_i| - t/2 \geq 2 \sqrt{t}] \leq e^{-\theta^2 p n/4} = e^{-2} < \frac{1}{4}.\]
Therefore, since $V_1$ and $V_2$ are complementary, $||V_i| - t/2| \leq 2 \sqrt{t}$ with probability at least $1/2$. Now, for each vertex $v$, let $D(v)$ be the set of neighbours of $v$ and $d(v)$ be the size of $D(v)$. Let $d_i(v)$ be the random variable whose value is the size of $D_i(v) = D(v) \cap V_i$.
By Lemma \ref{Chernoff} with $p = 1/2$, $n = d(v)$ and $\theta = 4\sqrt{\log t/d(v)}$, we see that, provided $d(v) \geq 16 \log t$ (this is the necessary condition to have $\theta \leq 1$),
\[\mathbb{P}[d_i(v) - d(v)/2 \geq 2 \sqrt{\d t \log t}] \leq e^{-\theta^2 p n/4} \leq e^{- 2 \log t} \leq \frac{1}{t^2}.\]
If, on the other hand, $d(v) < 16 \log t$, the condition $\d \geq 64 \log t/t$ automatically implies that $d_i(v) \leq d(v) \leq 2 \sqrt{\d t \log t}$.
Adding over all $v$ and $i = 1, 2$, we see that $d_i(v) \leq d(v)/2 + 2 \sqrt{\d t \log t}$ holds in all cases with probability at least $1 - \frac{2}{t} > 1/2$. Since also $||V_i| - t/2| \leq 2 \sqrt{t}$ with probability greater than $1/2$, the result follows.
\end{proof}

The key property of random graphs that we will need to make use of is that for every vertex set $V$ of a given size there can only be a few vertices which have greater than the expected degree within $V$. The following lemma is sufficient for our purposes.

\begin{lemma} \label{RandomProp}
Let $H$ be a random graph on $t$ vertices formed by taking each edge independently with probability $\r$ and let $0 < \d, \e \leq 1$ be real numbers. Then, provided $\r \geq \frac{24\log t}{\e^2 \d t}$, $H$ satisfies the following condition with probability $e^{-\log(e/\d) \d t}$. For every vertex set $V$ of size $\d t$, the number of vertices with more than $(1 + \e) \r \d t$ neighbours in $V$ is at most $\frac{12 \log(e/\d)}{\r \e^2}$.
\end{lemma}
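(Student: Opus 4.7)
The plan is a direct first-moment union bound. Set $k = \frac{12\log(e/\d)}{\r\e^2}$ and, for each $V \subseteq V(H)$ with $|V| = \d t$, let $B_V$ denote the set of vertices having more than $(1+\e)\r\d t$ neighbours in $V$. It suffices to bound $\mathbb{P}[\exists\, V : |B_V| > k]$ by the stated quantity.

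For a fixed $V$ and a fixed vertex $v \notin V$, the degree $d_V(v) = |N(v) \cap V|$ is binomially distributed with parameters $\d t$ and $\r$, so Lemma \ref{Chernoff} with $\theta = \e$ gives $\mathbb{P}[d_V(v) > (1+\e)\r\d t] \leq e^{-\e^2 \r \d t/4}$. For any $W \subseteq V(H) \setminus V$ of size $k$, the indicator events $\{v \in B_V\}_{v \in W}$ depend on pairwise disjoint sets of edges and are therefore independent, giving $\mathbb{P}[W \subseteq B_V] \leq e^{-k\e^2\r\d t/4}$. The case where $W$ meets $V$ involves only single-edge dependencies inside $W \cap V$ and is absorbed by a harmless constant --- for instance, by bounding bad vertices inside and outside $V$ separately, each by $k/2$. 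A union bound over $V$ and $W$ therefore yields
\[\mathbb{P}[\exists\, V : |B_V| > k] \leq \binom{t}{\d t}\binom{t}{k}\, e^{-k\e^2\r\d t/4} \leq \left(\frac{e}{\d}\right)^{\d t} t^k\, e^{-k\e^2\r\d t/4}.\]

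Substituting the chosen $k$ turns the Chernoff exponent into $3\log(e/\d)\d t$; the hypothesis $\r \geq 24\log t/(\e^2\d t)$ forces $k\log t \leq \tfrac12\log(e/\d)\d t$; and the $\binom{t}{\d t}$ factor contributes $\log(e/\d)\d t$. The Chernoff gain then comfortably dominates the sum of the two enumeration terms, producing the required probability bound.

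This is essentially a routine calculation; the only real subtlety is calibrating $k$ so that the Chernoff decay outpaces both $\binom{t}{\d t}$ and $\binom{t}{k}$, and the hypothesis $\r \geq 24\log t/(\e^2\d t)$ is exactly what makes the latter enumeration cheap.
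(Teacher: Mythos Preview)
Your proof is correct and follows essentially the same approach as the paper: a first-moment union bound over pairs $(V,W)$ combined with the Chernoff estimate of Lemma~\ref{Chernoff}, with the same choice of $k$ and the same final arithmetic. If anything, you are slightly more careful than the paper about the dependence issue when $W$ meets $V$, which the paper simply glosses over by asserting independence of the $X_u$.
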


\begin{proof}
To prove the bound we again make use of Lemma \ref{Chernoff}. Indeed, given a fixed set $V$ of size $\d t$ and a vertex $u$, the variable $X_u$ counting the number of edges between $u$ and $V$ is binomial with probability $\r$. Therefore, by the Chernoff bound, 
\[\mathbb{P} (X_u \geq (1+\e) \r \d t) \leq e^{-\e^2 \r \d t/4}.\]
Since the $X_u$ are independent, we also see that, for any vertices $u_1, \cdots, u_d$, the probability that $X_{u_i} \geq (1+\e) \r \d t$ for all $1 \leq i \leq d$ is at most $e^{-d \e^2 \r \d t/4}$. The expected number of pairs consisting of a vertex set $V$ of size $\d t$ and a set of $d$ vertices $u_1, u_2, \cdots, u_d$ such that, for each $1 \leq i \leq d$, the number of neighbours that $u_i$ has in $V$ is at least $(1+\e)\r \d t$ is therefore at most
\[\binom{t}{\d t} \binom{t}{d} e^{-d \e^2 \r \d t/4}.\]
Taking $d = \frac{12 \log(e/\d)}{\r \e^2} \leq \frac{\log(e/\d) \d t}{2 \log t}$, we see that this is at most
\begin{eqnarray*}
\left(\frac{e}{\d}\right)^{\d t} \left(\frac{et}{d}\right)^d e^{-d \e^2 \r \d t/4} & = & e^{\log(e/\d) \d t} e^{\log(et/d) d} e^{-3 \log(e/\d) \d t}\\
& = & e^{\log(e t/d) d} e^{-2 \log(e/\d) \d t} \leq e^{-\log(e/\d) \d t}.
\end{eqnarray*}
The result follows.
\end{proof}

We will now prove Theorem \ref{IntroRandom}. Before we begin, we need a definition. We shall say that a graph $H$ is $(\D, q)$-bounded if, apart from an exceptional set of at most $q$ vertices, the maximum degree of every vertex in $H$ is $\D$.

\begin{theorem} \label{Max}
Let $H$ be a random graph on $t$ vertices such that each edge is chosen independently with probability $\r$. Then, provided $2^{15} \frac{\log^{3/2} t}{\sqrt{t}} \leq \r \leq \frac{1}{100}$, $H$ almost surely satisfies
\[r(H) \leq 2^{1100 \r \log (2/\r) t}.\]
\end{theorem}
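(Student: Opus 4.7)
The strategy is to combine the Ramsey reduction scheme of Theorem \ref{CliqueMax} with the partition lemma (Lemma \ref{JudPart}) and the random-structure lemma (Lemma \ref{RandomProp}), so as to eliminate one factor of $\log(2/\rho)$ from the exponent of the bound for $r(K_t, H)$. The target is to prove the stronger inductive bound $r(K_s, H) \le (2s/(\rho t))^{C\rho t}$ for all $\rho t \le s \le t$, with $C$ a universal constant, which at $s = t$ collapses to the desired $2^{C\rho\log(2/\rho)t}$ and in particular bounds $r(H)$.

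First I would use Lemma \ref{Chernoff} to show that, almost surely, $H$ has maximum degree at most $2\rho t$ and at least $\rho\binom{t}{2}/2$ edges. I would then apply Lemma \ref{RandomProp} with $\epsilon = 1$ to each dyadic $\delta \in [\rho, 1]$, so that almost surely, for every vertex set of size $\delta t$, only $O(\log(1/\delta)/\rho)$ vertices of $H$ have more than $2\rho\delta t$ neighbors in it. The hypothesis $\rho \ge 2^{15}\log^{3/2} t/\sqrt{t}$ ensures that all such error terms are negligible in what follows. Next I would invoke Lemma \ref{JudPart} to partition $V(H) = V_1 \cup V_2$ so that $|V_i|$ is within $2\sqrt{t}$ of $t/2$ and the induced maximum degree of each $H[V_i]$ is at most $\rho t/2 + O(\sqrt{\rho t \log t})$, which is essentially $\rho t/2$.

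The core of the argument is an induction on $s$ mirroring the recursive scheme of Theorem \ref{CliqueMax}, but with two crucial modifications. First, Lemma \ref{Embedding} is applied with $\delta$ bounded away from zero (say $\delta = 1/4$) rather than $\delta = \rho$, so that the bi-density parameter $\sigma$ is only $2^{-O(\rho t)}$ per step rather than $\rho^{\rho t}$; this replaces the per-step factor $(2/\rho)^{\rho t}$ with $2^{O(\rho t)}$, which is exactly the saving needed to drop one power of $\log(2/\rho)$. Second, when bi-density fails, the opposite-color density between the resulting sets $A$ and $B$ is only $> 3/4$ instead of the near-$1$ value obtained with $\delta = \rho$, and I would replace the K\H{o}v\'ari--S\'os--Tur\'an completion step of Theorem \ref{CliqueMax} with an argument tailored to random $H$: using Lemma \ref{RandomProp}, almost every unembedded $v \in V_2$ has a neighborhood in $V_1$ of size close to $\rho|V_1|$, and the few exceptional high-degree vertices can be set aside and embedded last inside the monochromatic clique produced by the recursion.

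The main obstacle lies precisely in this second modification. In Theorem \ref{CliqueMax}, carrying out the K\H{o}v\'ari--S\'os--Tur\'an completion requires opposite-color density $1 - O(\rho)$, not merely a constant bounded away from zero; the convexity estimate $\binom{(1-2\rho)|S|}{l}/\binom{|S|}{l} \ge (1-8\rho)^l$ used there degenerates badly when the density is only $3/4$, essentially forbidding $l$ much larger than $|S|/4$. Replacing this step with a randomness-driven extension --- handling the $O(\log(1/\rho)/\rho)$ high-degree exceptional vertices of $V_2$ separately via the bound from Lemma \ref{RandomProp}, while greedily embedding the bulk of $V_2$ using typical-degree estimates inside the blue-dense structure between the clique and $B$ --- is the delicate technical heart of the proof, and is where the saving of the $\log(2/\rho)$ factor must ultimately come from.
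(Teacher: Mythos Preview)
Your proposal has a genuine structural gap: the inductive statement $r(K_s,H)\le (2s/(\rho t))^{C\rho t}$ cannot be made to work, and the saving of a $\log(2/\rho)$ factor does not come from lowering $\delta$ in the embedding lemma.

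First, the base case already fails. At $s=\rho t$ your bound reads $2^{C\rho t}$, but $r(K_{\rho t},H)$ is governed by the off-diagonal clique bound and is of order $(2/\rho)^{c\rho t}=2^{c\rho\log(2/\rho)t}$, which exceeds $2^{C\rho t}$ for any fixed $C$ once $\rho$ is small. More seriously, in a scheme of the form ``$K_s$ versus $H$'', the object you must build up step by step on the blue side is a \emph{clique}, which has no random structure to exploit; all the information from Lemmas~\ref{JudPart} and~\ref{RandomProp} concerns $H$, and it cannot help you complete $K_s$ from $K_{2s/3}$. You correctly diagnose that with $\delta=1/4$ the K\H{o}v\'ari--S\'os--Tur\'an extension collapses (the ratio $\binom{(3/4)|S|}{l}/\binom{|S|}{l}$ is $2^{-c|S|}$ for any $l$ comparable to $|S|$), but your proposed ``randomness-driven extension'' applies to the wrong side of the recursion and does not repair this.

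The paper's argument is organised quite differently. It keeps $\delta=\rho$ in Lemma~\ref{Embedding} (so the blue density between $A$ and $B$ is still $1-\rho$, and the K\H{o}v\'ari--S\'os--Tur\'an step survives), and it replaces the one-sided induction on $r(K_s,H)$ by a \emph{symmetric} induction on $r(H_1,H_2)$, where $H_1,H_2$ are $(\Delta_i,\sqrt t)$-bounded induced subgraphs of $H$ on $s_1,s_2$ vertices with $\Delta_i\approx \rho s_i$. The blue target $H_1$ is halved by Lemma~\ref{JudPart}; one finds a blue copy of $H_1[V_1]$ inside $A'$ by induction, and then the K\H{o}v\'ari--S\'os--Tur\'an count is run with $l=\bigl(1-\tfrac{c}{\log(2/\rho)}\bigr)|S|$, discarding only a $\tfrac{c}{\log(2/\rho)}$ fraction of $S$. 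Lemma~\ref{RandomProp} is invoked exactly here: since the discarded set $S\setminus T$ has size at least $\sqrt t$, all but $\sqrt t$ vertices of $H_1$ have at most $2\rho|S\setminus T|$ neighbours in it, so the complementary piece $L$ (which is $H_1[V_2]$ together with the small leftover) again satisfies the degree hypothesis of the induction with $|V(L)|\le \tfrac{4}{5}t_1$. The saving of one $\log(2/\rho)$ factor comes from the fact that the embedding cost $\rho^{2\rho s_2}$ now shrinks with the recursion (since $s_2\le s_1$ decreases geometrically), so the inductive bound can be taken of the form $2^{c(s_1+s_2)\rho\log(2/\rho)}$, linear in $s_1+s_2$, rather than $(s/(\rho t))^{c\rho\log(2/\rho)t}$ as in Theorem~\ref{CliqueMax}.
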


\begin{proof}
By Lemma \ref{RandomProp} with $\d \geq t^{-1/2}$ and $\e = 1$, a random graph $H$, where each edge is chosen with probability $\r \geq \frac{24 \log t}{\sqrt{t}}$, will, with probability at least $1 - e^{-\d t} \geq 1 - 1/t^2$, be such that, for every set $V$ of size $\d t$, the number of vertices with more than $2 \r \d t$ neighbours in $V$ is at most $\frac{12 \log(e/\d)}{\r} \leq \sqrt{t}$. Adding over all possible sizes of $V$ between $\sqrt{t}$ and $t$, we see that with probability at least $1 - 1/t$, the graph $H$ will be such that, for every set $V$ of size at least $\sqrt{t}$, there are at most $\sqrt{t}$ vertices which have more than $2 \r |V|$ neighbours in $V$. 

We would also like our graph to satisfy a certain maximum degree condition. To this end, note that by Lemma \ref{Chernoff} with $n = t$, $p = \r$ and $\theta = 4 \sqrt{\log t/\r t}$, the maximum degree $d(v)$ of any given vertex $v$ will be such that
\[\mathbb{P}[d(v) - \r t \geq 4 \sqrt{\r t \log t}] \leq e^{-4 \log t} \leq \frac{1}{t^2}.\] 
Therefore, with probability at least $1 - 1/t$, every vertex in $H$ has degree at most $\r t + 4 \sqrt{\r t \log t}$. 

Combining the last two paragraphs, we see that, with probability at least $1 - 2/t$, $H$ is a graph with maximum degree $\r t + 4 \sqrt{\r t \log t}$ such that, for every set $V$ of size at least $\sqrt{t}$, there are at most $\sqrt{t}$ vertices with more than $2 \r |V|$ neighbours in $V$. We will henceforth assume that $H$ is just such a graph.

We shall prove, by induction, that, for all pairs $(s_1, s_2)$ such that $\r t \leq s_1, s_2 \leq t$, if $H_1$ and $H_2$ are $(\D_i, q_i)$-bounded subgraphs of $H$ on $s_1$ and $s_2$ vertices respectively with 
\[\D_i = \r s_i \left(1  + \frac{\log(2t/s_i)}{\log(2/\r)} \right)\]
and
\[q_i = \sqrt{t},\] 
then
\[r(H_1, H_2) \leq 2^{500(s_1 + s_2) \r \log (2/\r)} \left(\frac{s_1 + s_2}{\r^2 t}\right)^{40 \r t}.\]
The theorem follows from taking $s_1 = s_2 = t$, noting that the maximum degree of $H$ satisfies
\[\r t +  4 \sqrt{\r t \log t} \leq \r t \left(1 + \frac{\log 2}{\log(2/\r)}\right),\]
whenever $\r \geq 36 \log^3 t/t$.

If $s_1 = \r t$ or $s_2 = \r t$ the result is easy, since
\[r(H_1, H_2) \leq r(K_{\r t}, K_t) \leq \binom{(1+\r) t}{\r t} \leq \left(\frac{e(1+\r)}{\r}\right)^{\r t} \leq \left(\frac{2}{\r}\right)^{2 \r t}.\]
Suppose, therefore, that $t_1 \geq t_2 \geq \r t$ and the result is true for all admissible pairs $(s_1, s_2)$ with $s_1 \leq t_1$ and $s_2 < t_2$ or $s_1 < t_1$ and $s_2 \leq t_2$. We wish to prove it for the pair $(t_1, t_2)$. 

To begin, we take care of the exceptional sets. Consider a complete graph on 
\[M = 2^{500 (t_1 + t_2) \r \log(2/\r)} \left(\frac{t_1 + t_2}{\r^2 t}\right)^{40 \r t}\] 
vertices whose edges have been two-coloured in red and blue. We construct a sequence of subsets of this vertex set $U_1 \supset U_2 \supset \cdots \supset U_l$ and a string $S$ consisting of $R$s and $B$s associated with this choice. To begin, let $u_1$ be an arbitrary vertex. If $u_1$ has at least $\r N$ neighbours in red, let $U_1$ be this set of neighbours and initalise the string as $S = R$. If, on the other hand, $u_1$ has at least $(1 - \r) N$ neighbours in blue, let $U_1$ be this set of neighbours and initialise the string as $S = B$. Suppose now that we have chosen $U_i$. Fix an arbitrary vertex $u_{i+1}$ in $U_i$. If $u_{i+1}$ has at least $\r |U_i|$ neighbours in red within $U_i$, we let $U_{i+1}$ be this set of neighbours and append an $R$ to our string $S$. Otherwise, we let $U_{i+1}$ be the set of blue neighbours and append $B$ to the end of the string. 

We stop our process when the string contains either $t_1-1$ occurrences of $B$ or $\sqrt{t} \leq \frac{\r}{\log(2/\r)} t$ occurrences of $R$. If the first case occurs, there are $t_1 - 1$ vertices $u_{i_1}, u_{i_2}, \cdots, u_{i_{t_1-1}}$ connected to each other and every vertex in the final set $U_l$ by blue edges. So, provided $U_l$ is non-empty, we have a blue $K_{t_1}$ and we are done. We therefore assume that the second case occurs and that there are $\sqrt{t}$ vertices which are connected to each other and every vertex in $U_l$ by red edges. Note that, since $t_1 \leq t$, $\r \leq \frac{1}{2}$ and $1 - x \geq 2^{-2x}$ whenever $0 \leq x \leq \frac{1}{2}$, $U_l$ has size at least 
\[\r^{\r t/\log(2/\r)} (1-\r)^{t} M \geq 2^{-3 \r t} M.\]

If we run the same process again, with the colours reversed, we may find a subset $W$ of $U_l$ and a set of vertices of size $\sqrt{t}$ such that all of these vertices are connected to each other and every element in $W$ by blue edges. Moreover, we may choose $W$ so that
\[|W| \geq 2^{-3 \r t} |U_l| \geq 2^{-6 \r t} M.\]
Note that, since $\frac{4}{5} t_1 + t_2 \leq \frac{9}{10} (t_1 + t_2)$,
\begin{eqnarray*}
2^{-500(t_1+t_2)\r \log(2/\r)} |W| & \geq & 2^{-6 \r t} \left(\frac{t_1 + t_2}{\r^2 t}\right)^{40 \r t} = 2^{-6 \r t} \left(\frac{t_1 + t_2}{\frac{4}{5} t_1 + t_2}\right)^{40 \r t} \left(\frac{\frac{4}{5} t_1 + t_2}{\r^2 t}\right)^{40 \r t}\\
& \geq & 2^{-6 \r t} (10/9)^{40 \r t} \left(\frac{\frac{4}{5} t_1 + t_2}{\r^2 t}\right)^{40 \r t} \geq \left(\frac{\frac{4}{5} t_1 + t_2}{\r^2 t}\right)^{40 \r t}.
\end{eqnarray*}
If we now let $H'_1$ and $H'_2$ be the graphs formed from $H_1$ and $H_2$ by removing the exceptional vertices, it will be sufficient to show that, in any two-colouring of the edges of $W$, there is a blue copy of $H'_1$ or a red copy of $H'_2$.

Let $G$ be a complete graph on 
\[N = 2^{500(t_1 + t_2) \r \log(2/\r)} \left(\frac{\frac{4}{5} t_1 + t_2}{\r^2 t}\right)^{40 \r t}\] 
vertices whose edges are two-coloured in red and blue. Note that the maximum degree of $H'_2$ is at most $2 \r t_2$. Therefore, by Lemma \ref{Embedding} with $\d = \r$, if the red subgraph is bi-$(\frac{1}{4} \r^{2 \r t_2} t_2^{-2}, \r)$-dense and $N \geq 4 \r^{- 2\r t_2} t_2^2$, there is a copy of $H'_2$ in red. We may therefore assume otherwise, that is, there exist two sets $A$ and $B$, each of size at least $\frac{1}{4} \r^{2 \r t_2} t_2^{-2} N$, such that the density of blue edges between $A$ and $B$ is at least $1 - \r$. Note that there exists $A' \subseteq A$ such that $|A'| \geq \r |A|$ and, for each $v \in A'$, the blue degree $d_B(v)$ of $v$ in $B$ is at least $(1 - 2 \r)|B|$. Otherwise, the density of edges between $A$ and $B$ would be less than $1 - \r$.

Applying Lemma \ref{JudPart}, we see that, provided $\r \geq 64 \log t_1/t_1$ and $t_1 \geq 16$ (both of which hold for $\r \geq 16 \sqrt{\log t/t}$), we may partition the vertex set of $H'_1$ into two vertex sets $V_1$ and $V_2$ so that, for $i= 1, 2$,
\[\left||V_i| - \frac{t_1}{2}\right| \leq 2 \sqrt{t_1}\]
and, since $\D_1$ is the maximum degree of $H'_1$, the maximum degree of any vertex of $H'_1$ into $H'_1[V_i]$ is at most 
\[\frac{\D_1}{2} + 2 \sqrt{\D_1 \log t_1}.\]
Assume, without loss of generality, that $|V_1| \leq |V_2|$. Note that, for $\r \geq 64/t$, $t_1 \geq 64$ and therefore $2 \sqrt{t_1} \leq t_1/4$. This implies that $t_1/4 \leq |V_i| \leq 3 t_1/4$. Also, by assumption, $\D_1 \leq \r t_1 + \frac{\log(2 t/t_1)}{\log(2/\r)} \r t_1$. Therefore, since this expression is at most $2 \r t_1$, the maximum degree $\D(H'_1[V_i])$ of $H'_1[V_i]$ satisfies
\begin{eqnarray*}
\D(H'_1[V_i]) & \leq & \frac{\r t_1}{2} +  \frac{1}{2} \frac{\log(2t/t_1)}{\log(2/\r)} \r t_1 + 2 \sqrt{2 \r t_1 \log t_1}\\
& \leq & \frac{\r t_1}{2} + \frac{\log(13t/6t_1)}{\log(2/\r)} \frac{\r t_1}{2} \leq \frac{\r t_1}{2} \left(1 + \frac{\log(13t/8|V_i|)}{\log(2/\r)} \right).
\end{eqnarray*}
The second line follows since, for $t_1 \geq \r t$ and $\r \geq 60 \log^{3/2} t/\sqrt{t}$, 
\[2 \sqrt{2 \r t_1 \log t_1} \leq 2 \sqrt{2 \r t_1 \log t} \leq \frac{\log (13/12)}{\log(2/\r)} \frac{\r t_1}{2}.\]  
Noting that $\frac{t_1}{2} \leq |V_i| + 2 \sqrt{t_1}$, we have
\begin{eqnarray*}
\D(H'_1[V_i]) & \leq & \r |V_i| \left(1 + \frac{\log(13t/8|V_i|)}{\log(2/\r)} \right) + 4 \r \sqrt{t_1}\\
& \leq & \r |V_i| \left(1 + \frac{\log(7t/4|V_i|)}{\log(2/\r)} \right),
\end{eqnarray*}
since, for $t_1 \geq \r t$ and $\r \geq 2^{15} \log^2 t/t$, 
\[4 \r \sqrt{t_1} \leq \r \frac{t_1}{4} \frac{\log(14/13)}{\log(2/\r)} \leq \r |V_i| \frac{\log(14/13)}{\log(2/\r)}.\]

We will now show that $A'$ contains either a red copy of $H'_2$ or a blue copy of $H'_1[V_1]$.
Indeed, note that
\begin{eqnarray*}
\left(\frac{\frac{4}{5} t_1 + t_2}{\r^2 t}\right)^{-40 \r t} |A'| & \geq & \frac{1}{4} \r^{2 \r t_2 + 1} t_2^{-2} 2^{500(t_1 + t_2) \r \log(2/\r)}\\
& \geq & \frac{\r}{4 t_2^2} 2^{123 \r \log(2/\r) t_1} 2^{500(\frac{3}{4} t_1 + t_2) \r \log(2/\r)}\\
& \geq & 2^{500(\frac{3}{4} t_1 + t_2) \r \log(2/\r)}.
\end{eqnarray*}
The final line follows since, for $\r \geq 2 \sqrt{\log t/t}$, we have $2^{\r \log(2/\r) t_1} \geq 4 t_2^2/\r$. Therefore, 
\[|A'| \geq 2^{500(\frac{3}{4} t_1 + t_2) \r \log(2/\r)} \left(\frac{\frac{4}{5} t_1 + t_2}{\r^2 t}\right)^{40 \r t} > 2^{500(\frac{3}{4} t_1 + t_2) \r \log(2/\r)} \left(\frac{\frac{3}{4} t_1 + t_2}{\r^2 t}\right)^{40 \r t}\]
By induction, since $|V_1| \leq \frac{3}{4} t_1$ and $H'_1[V_1]$ has maximum degree less than
\[\r |V_1| \left(1 + \frac{\log(2t/|V_1|)}{\log(2/\r)} \right)\]
the graph contains either a red copy of $H'_2$ or a blue copy of $H'_1[V_1]$. Note that if $\frac{3}{4} t_1$ is smaller than $\r t$, we are still fine, since $t_2 \geq \r t$ and, therefore,
\[\left(\frac{\frac{3}{4} t_1 + t_2}{\r^2 t}\right)^{40 \r t} \geq 2^{40 \r \log(1/\r) t} \geq r(K_{\r t}, K_t).\] 
If $A'$ contains a red copy of $H'_2$, we are done. We therefore assume that $A'$ contains at least one blue copy of $H'_1[V_1]$. Let the vertex set of this copy of $H'_1[V_1]$ be $S$.

By choice, every element of $A'$ has blue degree at least $(1 - 2 \r)|B|$ in $B$. Hence, the blue density between $S$ and $B$ is at least $1 - 2\r$. Let $l = \left(1 - \frac{\log(15/14)}{2 \log(2/\r)}\right) |S|$. We are going to count the number of blue copies of the bipartite graph $K_{1, l}$, where the single vertex lies in $B$ and the collection of $l$ vertices lies in $S$. 

Let $d_S(v)$ be the degree of a vertex $v$ from $B$ in $S$. Note, by convexity, that the number of $K_{1, l}$ is at least 
\[\sum_{v \in B} \binom{d_S(v)}{l} \geq |B| \binom{\frac{1}{|B|} \sum_{v \in B} d_S(v)}{l}
\geq |B| \binom{(1-2\r)|S|}{l}.\]
Note that $|S| - l = \frac{\log(15/14)}{2 \log(2/\r)} |S|$ and $\log(15/14) \geq 1/12$. Therefore,
\[\binom{(1-2\r)|S|}{l}/\binom{|S|}{l} = \prod_{i=0}^{l-1} \left(\frac{(1-2\r)|S| - i}{|S| - i}\right) \geq \prod_{i=0}^{l-1} \left(1 - \frac{2 \r |S|}{|S|-i} \right) \geq (1 - 48 \r \log(2/\r))^l.\]
Therefore, since $l \leq t_1$, $\r \leq \frac{1}{96}$ and, for $0 \leq x \leq \frac{1}{2}$, we have $1 - x \geq 2^{-2x}$,
\begin{eqnarray*}
\left(\frac{\frac{4}{5} t_1 + t_2}{\r^2 t}\right)^{-40 \r t} |B| \frac{\binom{(1-2\r)|S|}{l}}{\binom{|S|}{l}} & \geq & \frac{1}{4} \r^{2 \r t_2} t_2^{-2} (1 - 48\r\log(2/\r))^{t_1}  2^{500(t_1 + t_2) \r \log(2/\r)}\\
& \geq & \frac{1}{4} \r^{2\r t_2} t_2^{-2} 2^{-96 \r \log(2/\r) t_1} 2^{500(t_1 + t_2) \r \log(2/\r)}\\
& \geq & t_2^{-2} 2^{2 \r \log(2/\r) t_1 - 2} 2^{500(\frac{4}{5} t_1 + t_2) \r \log(2/\r)}\\
& \geq & 2^{500(\frac{4}{5} t_1 + t_2) \r \log(2/\r)}.
\end{eqnarray*}  
The last line follows since, for $\frac{1}{\sqrt{t}} \leq \r \leq \frac{1}{8}$, we have $2^{2 \r \log(2/\r) t - 2} \geq 2^{8 \sqrt{t} - 2} \geq t^2$. There is, therefore, some subset $T$ of $S$, with size $(1 - \frac{\log(15/14)}{2 \log(2/\r)}) |S|$ such that at least $2^{500 (\frac{4}{5} t_1 + t_2) \r \log(2/\r)} \left(\frac{\frac{4}{5} t_1 + t_2}{\r^2 t}\right)^{40 \r t}$ vertices in $B$ are connected to each element of $T$ in blue. The set $T$ contains a subgraph $K$ of $H'_1[V_1]$. Let $L$ be the induced subgraph of $H'_1$ on the complementary vertex set to $K$. Note that $L$ includes $H'_1[V_2]$ as a subgraph. By the choice of $H$, since, provided $\r \geq 96 \log t/\sqrt{t}$, 
\[|S \char92 T| \geq \frac{|S|}{24 \log(2/\r)} \geq \frac{t_1}{96 \log(2/\r)} \geq \frac{\r t}{96 \log(2/\r)} \geq \sqrt{t},\] 
there exist at most $\sqrt{t}$ exceptional vertices in $H'_1$ with more than $2 \r |S \char92 T|$ neighbours in $S \char92 T$. Moreover, since every vertex in $H'_1$ has degree at most
\[\r |V_2| \left(1  + \frac{\log(7t/4|V_2|)}{\log(2/\r)} \right)\]
in $H'_1[V_2]$ and $|V_1| \leq |V_2|$, the degree in $L$ of the non-exceptional vertices is at most
\begin{eqnarray*}
\r |V_2|\left(1 + \frac{\log(7t/4|V_2|)}{\log(2/\r)} \right) + 2 \r |S \char92 T| & \leq & \r |V_2| \left(1 + \frac{\log(7t/4|V_2|)}{\log(2/\r)} \right) + 2 \r |V_1| \frac{\log(15/14)}{2 \log(2/\r)}\\
& \leq & \r |V_2| \left(1 + \frac{\log(15t/8|V_2|)}{\log(2/\r)} \right).
\end{eqnarray*}
Note now that the vertex set $V(L)$ of $L$ has size
\[V(L) = |V_2| + |V_1| \frac{\log(15/14)}{2 \log(2/\r)} \leq |V_2| \left(1+ \frac{ \log(15/14)}{2 \log(2/\r)}\right) \leq \frac{16}{15} |V_2|.\]
Therefore, the degree in $L$ of the non-exceptional vertices is at most
\[\r |V(L)| \left(1 + \frac{\log(2t/|V(L)|)}{\log(2/\r)}\right).\]
Note also, since $|V_2| \leq \frac{3}{4} t_1$, that $|V(L)| \leq \frac{16}{15} |V_2| \leq \frac{4}{5} t_1$.
 
Since there are at most $\sqrt{t}$ exceptional vertices, we may apply the induction hypothesis with $s_1 = |V(L)|$ and $s_2 = t_2$ to conclude that
\[r(L, H'_2) \leq 2^{500(\frac{4}{5} t_1 + t_2) \r \log(2/\r)} \left(\frac{\frac{4}{5} t_1 + t_2}{\r^2 t}\right)^{40 \r t}.\]
Now, recall that there is a set $B'$ of vertices in $B$ such that
\[|B'| \geq 2^{500(\frac{4}{5} t_1 + t_2) \r \log(2/\r)} \left(\frac{\frac{4}{5} t_1 + t_2}{\r^2 t}\right)^{40 \r t}\]
and, for every $x \in T$ and $y \in B'$, the edge $xy$ is blue. Therefore, $B'$ contains either a blue copy of $L$ or a red copy of $H'_2$. In the latter case we are done, so we assume that there is a blue copy of $L$. But the set $T$, to which it is joined exclusively by blue edges, contains a blue copy of its complement $K$ in $H'_1$. We therefore have a blue copy of $H'_1$.
\end{proof}

\section{Conclusion} \label{Conclusion}

The most obvious question that remains open is the first we asked. That is, given a graph $H$ on $t$ vertices with density $1/2$, show that its Ramsey number is smaller than $(4 - \e)^t$ for some positive $\e$. More generally, we have the following problem.

\begin{problem}
For every $\d > 0$, show that there is $\e > 0$ such that any graph $H$ of density $1 - \d$ satisfies $r(H) \leq (4 - \e)^t$.
\end{problem}

A second problem, which affects much of the recent work in Ramsey theory, is that our proofs do not extend to cover the multicolour case. We fully believe that Theorems \ref{IntroMain} and \ref{IntroRandom} should extend to the multicolour case, but it seems that a fundamentally new idea will be necessary to get anywhere near.

\begin{problem}
Extend Theorems \ref{IntroMain} and \ref{IntroRandom} to the multicolour case.
\end{problem}

It is natural also to try and suggest that the results of this paper be generalised to hypergraphs. It seems unlikely, however, that such a generalisation is true. Indeed, in \cite{CFS09}, the author, together with Fox and Sudakov, has shown that there are 3-uniform hypergraphs on $n$ vertices whose density tends to zero as $n$ gets large but whose 4-colour Ramsey number is at least $2^{2^{c n}}$. Up to the constant $c$, this is the same as the Ramsey number of the complete graph on $n$ vertices. While this does not rule out some sort of miracle in the 2-colour case, it does seem to make the possibility unlikely.

\end{document}